\begin{document}
\newtheorem*{thm*}{Theorem}
\newtheorem{theorem}{Theorem}[section]
\newtheorem{corollary}[theorem]{Corollary}
\newtheorem{lemma}[theorem]{Lemma}
\newtheorem{fact}[theorem]{Fact}
\newtheorem*{fact*}{Fact}
\newtheorem{proposition}[theorem]{Proposition}
\newtheorem{claim}{Claim}

\newcounter{theoremalph}
\renewcommand{\thetheoremalph}{\Alph{theoremalph}}
\newtheorem{thmAlph}[theoremalph]{Theorem}
\theoremstyle{definition}
\newtheorem{question}[theorem]{Question}
\newtheorem{definition}[theorem]{Definition}

\theoremstyle{remark}
\newtheorem{remark}[theorem]{Remark}
\newtheorem{observation}[theorem]{Observation}
\newtheorem{example}[theorem]{Example}


\renewcommand{\ll}{\left\langle}
\newcommand{\rr}{\right\rangle}
\newcommand{\ls}{\left\{}
\newcommand{\rs}{\right\}}
\newcommand{\sm}{\setminus}

\newcommand{\mcA}{\ensuremath{\mathcal{A}}}
\newcommand{\mcB}{\ensuremath{\mathcal{B}}}
\newcommand{\mcC}{\ensuremath{\mathcal{C}}}
\newcommand{\mcE}{\ensuremath{\mathcal{E}}}
\newcommand{\mcF}{\ensuremath{\mathcal{F}}}
\newcommand{\mcG}{\ensuremath{\mathcal{G}}}
\newcommand{\mcH}{\ensuremath{\mathcal{H}}}
\newcommand{\mcI}{\ensuremath{\mathcal{I}}}
\newcommand{\mcJ}{\ensuremath{\mathcal{J}}}
\newcommand{\mcM}{\ensuremath{\mathcal{M}}}
\newcommand{\mcN}{\ensuremath{\mathcal{N}}}
\newcommand{\mcO}{\ensuremath{\mathcal{O}}}
\newcommand{\mcP}{\ensuremath{\mathcal{P}}}
\newcommand{\mcQ}{\ensuremath{\mathcal{Q}}}
\newcommand{\mcR}{\ensuremath{\mathcal{R}}}
\newcommand{\mcS}{\ensuremath{\mathcal{S}}}
\newcommand{\mcU}{\ensuremath{\mathcal{U}}}
\newcommand{\mcV}{\ensuremath{\mathcal{V}}}
\newcommand{\mcW}{\ensuremath{\mathcal{W}}}
\newcommand{\mcZ}{\ensuremath{\mathcal{Z}}}
\newcommand{\mbK}{\ensuremath{\mathbb{K}}}
\newcommand{\mbQ}{\ensuremath{\mathbb{Q}}}
\newcommand{\mbR}{\ensuremath{\mathbb{R}}}
\newcommand{\mbZ}{\ensuremath{\mathbb{Z}}}
\newcommand{\mbN}{\ensuremath{\mathbb{N}}}

\newcommand{\on}[1]{\operatorname{#1}}
\newcommand{\set}[1]{\ensuremath{ \left\lbrace #1 \right\rbrace}}	
\newcommand{\grep}[2]{\ensuremath{\left\langle #1 \, \middle| \, #2\right\rangle}}
\newcommand{\real}[1]{\ensuremath{\left\lVert #1\right\rVert}}
\newcommand{\overbar}[1]{\mkern 2mu\overline{\mkern-2mu#1\mkern-2mu}\mkern 2mu}
\newcommand{\til}[1]{\widetilde{#1}}
\newcommand{\gen}[1]{\ensuremath{\left\langle #1 \right\rangle}}

\newcommand{\Aut}{\ensuremath{\operatorname{Aut}}}
\newcommand{\AutO}{\ensuremath{\operatorname{Aut}^0}}
\newcommand{\Out}{\ensuremath{\operatorname{Out}}}
\newcommand{\Outo}[1][A_\Gamma]{\ensuremath{\operatorname{Out}^0(#1)}}
\newcommand{\Stab}{\operatorname{Stab}}
\newcommand{\Sym}{\operatorname{Sym}}
\newcommand{\St}{\operatorname{St}}
\newcommand{\Stsymp}{\operatorname{St}^\omega}

\newcommand{\op}{\operatorname{op}}
\newcommand{\st}{\operatorname{st}}
\newcommand{\lk}{\operatorname{lk}}
\newcommand{\im}{\operatorname{im}}
\newcommand{\rk}{\operatorname{rk}}
\newcommand{\corank}{\operatorname{crk}}
\newcommand{\spn}{\operatorname{span}}
\newcommand{\Ind}{\operatorname{Ind}}
\newcommand{\vcd}{\operatorname{vcd}}
\newcommand{\id}{\operatorname{id}}
\newcommand{\len}{\operatorname{len}}

\newcommand{\GL}[2]{\ensuremath{\operatorname{GL}_{#1}(#2)}}
\newcommand{\SL}[2]{\ensuremath{\operatorname{SL}_{#1}(#2)}}
\newcommand{\Sp}[2]{\ensuremath{\operatorname{Sp}_{#1}(#2)}}
\newcommand{\SO}{\ensuremath{\operatorname{SO}}}


\newcommand{\FreeF}{\mathcal{F}}
\newcommand{\FreeS}{\mathcal{FS}}
\newcommand{\FreeSone}{\mathcal{FS}^1}
\newcommand{\bFreeS}{b\mathcal{FS}}
\newcommand{\bFreeSone}{b\mathcal{FS}^{1}}
\newcommand{\CyclicS}{\mathcal{FZ}}

\newcommand{\IA}{\mathrm{IA}} 
\newcommand{\FFS}{\mathcal{FF}} 
\newcommand{\Frames}{\mathcal{PB}} 
\newcommand{\PartialBases}{\mathcal{B}} 
\newcommand{\MaxCore}[1]{\mathring{#1}} 
\newcommand{\NonTrees}{\ensuremath{\operatorname{X}}} 
\newcommand{\Core}{\ensuremath{\operatorname{C}}} 
\newcommand{\FC}{\mathrm{FC}}
\newcommand{\F}{\mathcal{F}}
    
\newcommand{\FreeSred}[1][n]{\mathcal{FS}_{#1}^{r}}
\newcommand{\bFreeSred}{\mathcal{FS}^{r,*}}

\newcommand{\X}{\ensuremath{\operatorname{X}}}
\newcommand{\Sub}{\ensuremath{\operatorname{Sub}}}

 \newcommand{\tq}{\mathrel{{\ensuremath{\: : \: }}}}

\title[Partial basis complexes of freely decomposable groups]{Connectivity of partial basis complexes of freely decomposable groups}

\author{Benjamin Br{\"u}ck}
\address{Universit{\"a}t M{\"u}nster \\
Institut f{\"u}r Mathematische Logik und Grundlagenforschung \\
48149 M{\"u}nster, Germany}
\email{benjamin.brueck@uni-muenster.de}

\author{Kevin I. Piterman}
\address{Vrije Universiteit Brussel \\
Department of Mathematics and Data Science \\
1050 Brussels, Belgium}
\email{kevin.piterman@vub.be}

\keywords{Free groups; Outer automorphisms; Partial bases; Cohen--Macaulay}

\subjclass[2020]{20F65, 20E06, 57M07}

\begin{abstract}
We show that the complex of partial bases of the free group of rank $n$, where vertices are seen up to conjugation, is Cohen--Macaulay of dimension $n-1$.
This confirms a conjecture by Day and Putman.
We prove our results in the more general context of freely decomposable groups.
\end{abstract}

\maketitle

\section{Introduction}

\subsection{Motivation}
Showing that a complex that a group acts on is highly connected (i.e.~has vanishing homotopy groups in low degrees) often allows one to deduce algebraic and cohomological properties about the corresponding group.
In the present article, the acting group will mostly be the automorphism group of a free group or free product. We begin, however, by discussing the analogous situation for $\SL{n}{\mbZ}$, which is already better understood.
In \cref{sec:related_work} we provide a brief overview of further related settings.
For now, we will avoid technical definitions and focus on the bigger picture.

Recall that the symmetric space $\SO(n)\backslash \SL{n}{\mbR}$ admits a natural action of the group $\SL{n}{\mbZ}$.
In \cite{BS:Cornersarithmeticgroups}, Borel--Serre constructed a bordification of this symmetric space and showed that its boundary has a nice ``algebraic description''. Namely, it has the homotopy type of the Tits building associated with the group $\SL{n}{\mbQ}$.
This is a simplicial complex that is highly connected, in the sense that it has dimension $n-2$ and is homotopy equivalent to a wedge of $(n-2)$-spheres.
Its only non-trivial reduced homology group hence sits in degree $n-2$, and is called the Steinberg module.
As the Borel--Serre bordification is a contractible manifold with corners, Poincar\'e--Lefschetz duality relates the ordinary cohomology of $\SL{n}{\mbZ}$ to its cohomology with coefficients in the Steinberg module \cite[Chapter 13.1.4]{AB:Buildings}. 
In \cite{Lee1976}, Lee--Szczarba described an explicit generating set for this module.
More recently, Church--Farb--Putman \cite{Church2019} 
provided a different proof for the construction of such a generating set by using that another ``algebraically constructed'' complex, namely the complex of partial bases of $\mathbb Z^n$, is highly connected.
Then, duality allowed them to deduce that the rational cohomology of $\SL{n}{\mbZ}$ vanishes at its virtual cohomological dimension.
To summarise: the group $\SL{n}{\mbZ}$ acts on a ``geometric'' contractible space, namely the symmetric space. This space has a boundary with an ``algebraic'' description, the Tits building, which is highly connected. There is also another ``algebraic'' complex, the complex of partial bases of $\mathbb Z^n$, whose high connectivity gives a generating set for the homology of the algebraic boundary. Combining these results allows one to compute part of the cohomology of $\SL{n}{\mbZ}$.

In the setting of $\Out(F_n)$, the outer automorphism group of the free group $F_n$ of rank $n$, the situation is far less well understood.
Nevertheless, there is a commonly used analogue of the symmetric space, namely, the Culler--Vogtmann Outer space \cite{CV:Moduligraphsautomorphisms}. Its simplicial completion, the free splitting complex, can be seen as a bordification of it, which is contractible by \cite{Hat:Homologicalstabilityautomorphism}.
The free splitting complex has several ``geometric'' descriptions, via marked graphs (of groups) \cite{CV:Moduligraphsautomorphisms}, actions on trees \cite{GL:outerspacefree}, and sphere systems in a double handlebody \cite{Hat:Homologicalstabilityautomorphism}.
There is also an ``algebraic'' description of its boundary: in \cite{BG:Homotopytypecomplex},  Brück--Gupta showed that such a boundary has the homotopy type of the complex of free factor systems, defined by Handel--Mosher \cite{HandelMosher}. 
They also gave a lower bound on the connectivity of this complex \cite[Theorems B and D]{BG:Homotopytypecomplex} and showed that an important subcomplex, the free factor complex, is homotopy equivalent to a wedge of $(n-2)$-spheres\footnote{An $\Aut(F_n)$-version of the free factor complex was defined by Hatcher--Vogtmann \cite{HVFreeFactors}.
It is also known to be $(n-2)$-spherical \cite{HVFreeFactors}, but its homology does not give the dualising module of $\Aut(F_n)$, as was shown by Himes--Miller--Nariman--Putman \cite{HMNPduality}}.
However, this is a proper subcomplex of the complex of free factor systems, and it is still unknown whether the entire boundary is highly connected, i.e. it has the homotopy type of a wedge of spheres of the maximal possible dimension (see \cite[Question 1.1]{BG:Homotopytypecomplex} and \cite[Question on p.~9]{Vogtmann2024}).\footnote{Note that the relation between the homology of this boundary complex and the analogue of the Steinberg module is less straightforward in this setting, since Outer space and the free splitting complex are not manifolds. For recent progress in this direction, see the work of Wade--Wasserman \cite{Wade2024a}.}

The present article makes a step towards an answer to this question by showing that the complex of free factor systems has many other highly connected subcomplexes (see \cref{thm_connectivity_free_factor_systems} and \cref{thm_CM_frames_intro}).
These results are not sufficient to determine its entire homotopy type, but they let us prove a conjecture by Day--Putman concerning an $\Out(F_n)$-analogue of the complex of partial bases.

\subsection{A conjecture by Day--Putman}

Write $[x]$ for the conjugacy class of an element or subgroup $x$ of $F_n$.
The \emph{complex of partial bases of $F_n$} is the complex $\PartialBases_n$ whose maximal simplices are sets $\{ C_1, \ldots, C_n\}$ of conjugacy classes such that there exists a basis $\{v_1,\ldots,v_n\}$ of $F_n$ with $C_i=[v_i]$.
It is not hard to show that $\PartialBases_n$ is a pure simplicial complex of dimension $n-1$.
This complex was studied by Day and Putman in \cite{DP:complexpartialbases} when looking for generating sets of certain groups of automorphisms of $F_n$.
Concretely, they provided a topological proof of a theorem by Magnus that describes a generating set for $\IA_n$, the Torelli subgroup of $\Aut(F_n)$. This group is defined as the kernel of the map $\Aut(F_n)\to \GL{n}{\mathbb{Z}}$. To obtain their result, Day--Putman showed that $\PartialBases_n$ is connected for $n\geq 2$, simply connected if $n\geq 3$, and that the orbit complex $\PartialBases_n/\IA_n$ is $(n-2)$-connected, i.e.~spherical of dimension $n-1$. 
Then they conjecture \cite[Conjecture 1.1]{DP:complexpartialbases} that $\PartialBases_n$ is $(n-2)$-connected as well. 
In this text, we prove their conjecture:

\begin{theorem}
\label{thm_connectivity_frames}
The simplicial complex $\PartialBases_n$ is Cohen--Macaulay of dimension $n-1$.
In particular, $\PartialBases_n$ is $(n-2)$-connected.
\end{theorem}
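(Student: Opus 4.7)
The plan is to deduce \cref{thm_connectivity_frames} from \cref{thm_CM_frames_intro}, which establishes the Cohen--Macaulayness of a related ``frame'' complex $\Frames_n$ whose vertices are conjugacy classes $[\gen{v}]$ of primitive cyclic free factors of $F_n$ and whose simplices are sets of such classes that jointly form a free factor system. The first step is to introduce the natural simplicial map
\[
\pi \colon \PartialBases_n \to \Frames_n, \qquad [v] \mapsto [\gen{v}],
\]
whose fiber over a vertex $[\gen{v}]$ is the set $\{[v], [v^{-1}]\}$, consisting of either one or two vertices depending on whether $v$ is conjugate to $v^{-1}$. For any simplex $\tau \in \Frames_n$, the preimage $\pi^{-1}(\tau)$ is the join of the vertex fibers, hence a non-empty join of points and $0$-spheres.

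The second step is to transport the Cohen--Macaulay property along $\pi$. To that end, I would establish the following general simplicial fiber lemma: if $X$ is Cohen--Macaulay of dimension $d$ and $\pi \colon Y \to X$ is a simplicial map whose restriction to each simplex is the join of non-empty finite discrete vertex fibers, then $Y$ is Cohen--Macaulay of dimension $d$. This can be proved by induction on $d$ using the standard link description: for a simplex $\sigma \in Y$ with image $\tau = \pi(\sigma)$, the link $\lk_Y(\sigma)$ fibers over $\lk_X(\tau)$ in exactly the same way, so the inductive hypothesis delivers the correct dimension and connectivity for links. The global $(n-2)$-connectivity of $\PartialBases_n$ then follows from the classical estimate that the connectivity of a join $A * B$ is at least the sum of the connectivities of $A$ and $B$ plus $2$, together with the Cohen--Macaulayness of $\Frames_n$ and non-emptiness of vertex fibers.

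The main obstacle in this strategy is not the fiber transfer, which is essentially formal, but the input \cref{thm_CM_frames_intro}. Establishing that $\Frames_n$ is Cohen--Macaulay appears to require working in the broader setting of freely decomposable groups so that one can run an induction on a notion of ``complexity'' along free factor decompositions; this also makes \cref{thm_connectivity_free_factor_systems} on the connectivity of the complex of free factor systems available during the induction step. The delicate point is that the link of $[\gen{v}]$ in $\Frames_n$ is not literally another frame complex of a smaller free group: the ``complementary'' free factor to $\gen{v}$ is only defined up to conjugation and twisting by powers of $v$, and this extra data must be absorbed into the inductive framework. Concretely, the inductive hypothesis needs to be formulated for complexes attached to arbitrary free products $G_1 * \cdots * G_k * F_r$ so that when one passes to the link of a cyclic factor, the resulting ``complementary group'' remains within the class being treated.
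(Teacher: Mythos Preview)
Your proposal is correct and takes essentially the same approach as the paper. What you call the ``simplicial fiber lemma'' is precisely the notion of \emph{inflation} (\cref{def:inflation}): the paper observes that $\PartialBases_n$ is the inflation of $\Frames(F_n,\emptyset)$ by the family $P_{[H]} = \{\text{conjugacy classes of generators of }H\}$, and then invokes \cite[Proposition 5.10]{PW} to transfer Cohen--Macaulayness, rather than reproving the fiber lemma by hand. Your identification of the real work as lying in \cref{thm_CM_frames_intro}, and of the need for a relative framework over free products to make the induction go through, matches the paper's strategy exactly.
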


\subsection{Free factor systems}
To prove the above result, we build on and extend work by the first-named author and Gupta \cite{BG:Homotopytypecomplex} and generalisations in \cite{Bru:buildingsfreefactor}, which relate $\PartialBases_n$ to the simplicial boundary of Culler--Vogtmann Outer space \cite{CV:Moduligraphsautomorphisms}.
In the present text, we will mostly refer to \cite{Bru:buildingsfreefactora}, which contains the material of both \cite{Bru:buildingsfreefactor} and \cite{BG:Homotopytypecomplex}.
In fact, we prove a more general version of \cref{thm_connectivity_frames} that not only applies to free groups but also to free products. To state it, we need to introduce some notation.

Let $A$ be a finitely generated group that is freely decomposable, i.e.~is isomorphic to a non-trivial free product. A \emph{free factor system} of $A$ is a finite set $\mcA = \ls [A_1], \ldots, [A_k] \rs$ of conjugacy classes of non-trivial proper subgroups $A_i\leq A$ such that $A = H \ast A_1 \ast \cdots \ast A_k$, where $H$ is some free subgroup of $A$ and $\ast$ denotes the free product.
We allow $\mcA$ to be empty, but note that the empty set is only a free factor system if $A$ is free itself.
The \emph{corank} of $\mcA$ is the rank of a free complement $H$, that is, $\corank(\mcA) = \rk(H)$, which is well-defined by the Grushko Theorem.

Let $\FFS(A)$ be the poset of free factor systems of $A$ that are proper (i.e. different from $\{[A]\}$), with ordering $\sqsubseteq$ given by refinement: We have $\mcA \sqsubseteq \mcA'$ if for all $C\in \mcA$ there exists $C'\in \mcA'$ such that a representative of $C$ is contained in some representative of $C'$.
For  $\mcA \in \FFS(A)$, we denote by  $\FFS(A,\mcA)$ the upper interval $\FFS(A)_{\sqsupset\mcA}$ (see \cite[Definition 4.2]{Bru:buildingsfreefactora}).
This is the poset of \textit{free factor systems relative to $\mcA$}, which was first studied by Handel--Mosher \cite{HandelMosher}.
By \cite[Proposition 6.3]{HandelMosher}, its dimension is $2\corank(\mcA)+|\mcA|-3$.
We show that its homotopy groups vanish up to codimension $\corank(\mcA)+1$:

\begin{theorem}
\label{thm_connectivity_free_factor_systems}
The poset $\FFS(A,\mcA)$ is $c$-connected for
\begin{equation*}
c = \max \lbrace \ \corank(\mcA)+|\mcA|-4,\ \corank(\mcA)-2 \ \rbrace.
\end{equation*}
\end{theorem}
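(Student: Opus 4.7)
The plan is to prove the connectivity bound by induction on the pair $(\corank(\mcA), |\mcA|)$, say lexicographically, taking as inductive hypothesis the statement \cref{thm_connectivity_free_factor_systems} for all freely decomposable groups $A'$ and all relative free factor systems of strictly smaller complexity. Base cases correspond to small values of $2\corank(\mcA) + |\mcA|$, where $c \leq -1$; these reduce either to the vacuous case ($\FFS(A,\mcA) = \emptyset$ allowed) or to non-emptiness, which is immediate from the existence of non-trivial refinements whenever the dimension $2\corank(\mcA) + |\mcA| - 3$ is non-negative. Note that the prescribed bound splits into two regimes — $\corank(\mcA) + |\mcA| - 4$ when $|\mcA| \geq 2$ and $\corank(\mcA) - 2$ when $|\mcA| \leq 1$ — reflecting whether the listed factors or the free complement dominates, and the induction has to keep track of this asymmetry.

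For the inductive step I would perform a link analysis on $\FFS(A,\mcA)$. Fix a vertex $\mcB \in \FFS(A,\mcA)$: its upper link is $\FFS(A,\mcB)$, which is highly connected by induction since $\mcB$ strictly refines $\mcA$, while its lower link consists of intermediate systems $\mcA \sqsubset \mcC \sqsubset \mcB$. Such a $\mcC$ is determined independently on each ``block'' of $\mcB$ sitting inside a common element of $\mcA$, so the lower link decomposes as a join of smaller relative free factor posets, one per element of $\mcA$, together with one piece coming from the free complement. Björner's join formula ($\on{conn}(X * Y) \geq \on{conn}(X) + \on{conn}(Y) + 2$) combined with the inductive hypothesis then gives a connectivity bound for stars of vertices. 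To pass from stars to the whole poset, I would either apply a covering/Nerve argument on the cover of $\FFS(A,\mcA)$ by stars of maximal simplices, or equivalently use Quillen's fibre theorem with a suitable order-preserving map into a simpler poset whose fibres are upper intervals of the form $\FFS(A,\mcB)$.

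The main obstacle is making the lower-link join decomposition precise, because a free complement to a free factor system is only well-defined up to conjugation. When one refines along the free part, it is non-trivial to separate which conjugacy classes of $\mcB$ come from splitting the free complement $H$ of $\mcA$ from those contained in the listed factors $A_i$. The correct formalism seems to force the ``free-complement factor'' of the join to be itself a \emph{relative} free factor poset of the free group $F_{\corank(\mcA)}$, whose connectivity then has to be controlled by the Hatcher--Vogtmann-style input (in its relative form, as developed in \cite{Bru:buildingsfreefactora}). Tracking this contribution carefully is what should produce the asymmetric term $\corank(\mcA) - 2$ in the statement, and it is the step where I would expect to lean most heavily on the machinery of \cite{Bru:buildingsfreefactora}.
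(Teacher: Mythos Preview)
Your approach is quite different from the paper's, and the decisive step --- going from connectivity of links to connectivity of $\FFS(A,\mcA)$ itself --- is where it breaks down.

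The paper does not proceed by induction and link analysis on $\FFS(A,\mcA)$. Instead it passes through the free splitting complex: one has a homotopy equivalence $\mcV\colon \bFreeS \to \FFS^{\op}$ (\cref{lem_bFreeS_FFS}), an auxiliary poset $Z \subseteq L \times \bFreeS$ whose second projection $p_2$ is a homotopy equivalence (\cref{lem_p2_equivalence}), and whose first projection $p_1\colon Z\to L$ is shown to be an $e(n,k)$-equivalence by identifying its Quillen fibres with posets $\Core(G,l)$ of core subgraphs of finite labelled graphs (\cref{lem_p1_equivalence}, resting on \cref{lem_con_X}). The global input that makes this work is the contractibility of $L$, the spine of relative Outer space \cite{GL:outerspacefree}. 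Your sketch has no substitute for this: knowing that every open interval of a poset is highly connected does \emph{not} force the poset itself to be highly connected (a disjoint union of two long chains has contractible open intervals but is not connected). Your two suggested fixes are both placeholders. A nerve argument on the cover by vertex stars returns only the order complex of $\FFS(A,\mcA)$ itself as the nerve, so nothing is gained; and ``a suitable order-preserving map into a simpler poset whose fibres are upper intervals $\FFS(A,\mcB)$'' is precisely what one would like, but no such target is in sight --- producing one is essentially the combined content of \cref{lem_bFreeS_FFS,lem_p2_equivalence,lem_p1_equivalence}, with $L$ playing the role of that simpler poset.

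Your observations about links are correct and do appear in the paper: upper intervals are $\FFS(A,\mcB)$ by definition, and lower intervals decompose as joins via \cref{prop_lower_intervals}. But these facts are used in the proof of \cref{thm_CM_frames_intro}, where \cref{thm_connectivity_free_factor_systems} is already available and one runs Quillen's fibre theorem along the inclusion $\Frames \hookrightarrow \FFS$ --- not in the proof of \cref{thm_connectivity_free_factor_systems} itself.
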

For $A= F_n$ and $\mcA = \emptyset$, we have $c = n-2$ and the result is \cite[Theorem B]{BG:Homotopytypecomplex}.
In \cite[Proposition 4.30]{Bru:buildingsfreefactora}, it was shown that $\FFS(A,\mcA)$ is homotopy equivalent to the simplicial boundary of relative versions of Culler--Vogtmann Outer space \cite{CV:Moduligraphsautomorphisms} as defined by Guirardel--Levitt \cite{GL:outerspacefree}, so \cref{thm_connectivity_free_factor_systems} implies that these boundaries are highly connected.

Note that \cref{thm_connectivity_free_factor_systems} does not completely determine the homotopy type of $\FFS(A,\mcA)$ as
the latter has dimension $2\corank(\mcA)+|\mcA|-3$, which is greater than $c+1$ in general (see also \cite[Question 4.50]{Bru:buildingsfreefactora} and the comments around it). 
We hope that the strategy presented here and in \cite{BG:Homotopytypecomplex} can be further developed to shed more light on the homotopy type of $\FFS(A,\mcA)$.

\subsection{Frame complexes}
To obtain \cref{thm_connectivity_frames}, we consider a certain subposet of $\FFS(A, \mcA)$:
The \textit{frame complex} associated with $(A, \mcA)$, denoted by $\Frames(A, \mcA)$, is the subposet of $\FFS(A, \mcA) \cup \{ [A]\}$ given by free factor systems $\mcA'$ such that for each $[A']\in \mcA'$, we either have $[A']\in \mcA$ or $A'\cong\mbZ$.
Note that $\Frames(A, \mcA)$ is a subposet of $\FFS(A, \mcA)$ except if $\mcA = \emptyset$ and $A\cong \mathbb Z$, where $\Frames(A,\mcA) = \{ [A] \}$. 
In general, the elements of $\Frames(A, \mcA)$ are of the form $\ls [\langle v_1 \rangle ],\ldots, [\langle v_i \rangle]\rs \sqcup \mcA$, where $\{v_1,\ldots,v_i\}$ is a partial basis of the free group $H$ in a decomposition $A = H \ast A_1 \ast \cdots \ast A_k$.
The poset relation of $\FFS(A, \mcA)$ restricts to the subset relation on $\Frames(A, \mcA)$, so we can see $\Frames(A, \mcA)$ as a simplicial complex with $0$-simplices of the form $\ls [\langle v \rangle ]\rs \sqcup \mcA$. Since $\mcA$ is present in any simplex, we could remove it from all of them and get an abstract simplicial complex; however, for consistency, we will keep $\mcA$ in our simplices and rather think of $\Frames(A,\mcA)$ as a poset.

\begin{theorem}
\label{thm_CM_frames_intro}
The complex $\Frames(A,\mcA)$ is Cohen--Macaulay of dimension $\corank(\mcA)-1$.
\end{theorem}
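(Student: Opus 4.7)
The plan is to proceed by induction on $n := \corank(\mcA)$. The base cases are immediate: for $n=0$ the complex reduces to the single vertex $\{\mcA\}$, and for $n=1$ it is a non-empty discrete set indexed by conjugacy classes of generators of the rank-one free complement. The dimension claim is also clear at once, since every simplex of $\Frames(A, \mcA)$ has the form $\{[\langle v_1 \rangle], \ldots, [\langle v_i \rangle]\} \sqcup \mcA$ for a partial basis $\{v_1, \ldots, v_i\}$ of a free complement $H$ of $\mcA$, and maximal simplices come from full bases of $H$.

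The Cohen--Macaulay link condition is then handled by the induction. For any $i$-simplex $\sigma = \{[\langle v_0\rangle], \ldots, [\langle v_i\rangle]\}$ of $\Frames(A, \mcA)$, the simplicial link is canonically identified with $\Frames(A, \mcA \sqcup \sigma)$, where $\mcA \sqcup \sigma$ is treated as the new base free factor system of corank $n - (i+1)$. The inductive hypothesis then gives that this link is Cohen--Macaulay of dimension $n-i-2 = \dim \Frames(A, \mcA) - i - 1$, which is the correct dimension for the CM property.

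The remaining, substantive step is to show that $\Frames(A, \mcA)$ is $(n-2)$-connected. The starting point is \cref{thm_connectivity_free_factor_systems}, which gives $(n-2)$-connectivity of the ambient poset $\FFS(A, \mcA)$. I would transfer this to $\Frames(A, \mcA)$ via a comparison argument for the inclusion $\Frames(A, \mcA) \hookrightarrow \FFS(A, \mcA)$. Concretely, for any $\mcA' \in \FFS(A, \mcA) \setminus \Frames(A, \mcA)$, the subposet of frames below $\mcA'$ decomposes as a join of frame complexes $\Frames(B_j, \emptyset)$ indexed by the new factors $B_j$ of $\mcA'$; since $\mcA'$ is not a frame, at least one $B_j$ is non-cyclic and each has free rank strictly less than $n$, so the inductive hypothesis yields strong connectivity bounds on these lower fibers.

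The hardest step will be executing this transfer rigorously, since a naive application of Quillen's Theorem~A does not suffice: the connectivity of the lower fiber at $\mcA'$ degrades with $\corank(\mcA')$, so it is not $(n-2)$-connected uniformly. A more refined comparison is therefore required, for instance a Bestvina--Brady / discrete Morse argument on $\FFS(A, \mcA)$ filtering the elements outside $\Frames(A, \mcA)$ by the total rank of their non-cyclic new factors, or a version of Quillen's fiber theorem pairing the connectivity of the lower fibers with the height of $\mcA'$ in $\FFS(A, \mcA)$. Either way, the decisive inputs are the connectivities of the smaller frame complexes provided by the induction hypothesis, together with the global $(n-2)$-connectivity of $\FFS(A, \mcA)$ from \cref{thm_connectivity_free_factor_systems}.
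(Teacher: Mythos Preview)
Your overall strategy---compare $\Frames(A,\mcA)$ to $\FFS(A,\mcA)$ via Quillen-type fibre arguments and use \cref{thm_connectivity_free_factor_systems} for the target---is the paper's strategy as well. But two points in your outline need correcting, and together they resolve exactly the ``hardest step'' you flag.

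First, the fibre decomposition is not $\bigast_j \Frames(B_j,\emptyset)$. For $\mcA'\in\FFS(A,\mcA)$ the lower fibre is (up to homotopy) $\bigast_{[B]\in\mcA'}\Frames(B,\mcA_{[B]})$, where $\mcA_{[B]}$ collects the members of $\mcA$ lying in $B$. In general $\mcA_{[B]}\neq\emptyset$, and this matters: take $A=H*A_1*A_2$ with $\rk(H)=n$, $\mcA=\{[A_1],[A_2]\}$, and $\mcA'=\{[H*A_1],[A_2]\}$. Here the only nonempty join factor is $\Frames(H*A_1,\{[A_1]\})$, which has corank $n$. So your induction on $n$ alone stalls. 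The fix is to induct on $n+k$ with $k=|\mcA|$; one always has $\corank_B(\mcA_{[B]})+|\mcA_{[B]}|<n+k$, and the join is then $(n-\corank(\mcA')-2)$-connected. (Also, your $n=0$ base case is off: $\Frames(A,\mcA)$ is empty, not a point, when $\corank(\mcA)=0$.)

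Second, the degradation you worry about is absorbed not by a Morse argument but by the \emph{upper} intervals. The refined fibre theorem (e.g.\ \cite[Theorem~5.3]{PW}) requires that $i^{-1}(\FFS_{\sqsubseteq\mcA'})*\FFS_{\sqsupset\mcA'}$ be $(n-2)$-connected. But $\FFS_{\sqsupset\mcA'}=\FFS(A,\mcA')$, and \cref{thm_connectivity_free_factor_systems} applied to the pair $(A,\mcA')$ (not just to $(A,\mcA)$) gives this poset $(\corank(\mcA')-2)$-connectivity. Adding $(n-\corank(\mcA')-2)+2+(\corank(\mcA')-2)=n-2$ closes the gap exactly. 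So the missing ingredient in your sketch is to invoke \cref{thm_connectivity_free_factor_systems} at every intermediate $\mcA'$, not only once at the top.
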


\subsubsection*{Relation between the theorems}
We deduce \cref{thm_CM_frames_intro} from \cref{thm_connectivity_free_factor_systems} by showing that the inclusion map $\Frames(A,\mcA) \to \FFS(A,\mcA)$ is highly connected (see Section \ref{sec_poset_notation} for definitions).
For $A = F_n$ and $\mcA = \emptyset$, we have $\corank(\mcA)=n$ and $\Frames(F_n,\emptyset)$ is an inflation 
of the complex $\PartialBases_n$
(in the sense of \cref{def:inflation}),
so \cref{thm_connectivity_frames} is an immediate consequence of \cref{thm_CM_frames_intro}.

\subsection{Related work}
\label{sec:related_work}
The complex $\PartialBases_n$ comes with a natural action of $\Out(F_n)$, the outer automorphism group of the free group. More generally, relative outer automorphism groups $\Out(A,\mcA^t)$ (see \cite[Section 4.1.2]{Bru:buildingsfreefactora}) act on $\Frames(A, \mcA)$ and $\FFS(A,\mcA)$.
We would like to mention that variants of the complexes $\PartialBases_n$ and $\Frames(A, \mcA)$ related to several other groups are known to be highly connected as well:

The $\Aut(F_n)$-version of \cref{thm_connectivity_frames} was established by Sadofschi Costa \cite{SC:complexOfPartialBases}, who showed that the simplicial complex whose simplices are the partial bases of $F_n$ (without modding out by the conjugation action on vertices) is Cohen--Macaulay of dimension $n-1$.
In that case, the spanning map yields a poset map from the face poset of the partial basis complex to the free factor poset, which the author leverages via a fibre-type argument relying on the Cohen–Macaulayness of the free factor poset, established by Hatcher--Vogtmann \cite{HVFreeFactors}. In our setting, where we work with conjugacy classes, there is no such spanning map. This introduces an additional challenge, which we address by considering product-type configurations and graph posets instead. In particular, our approach differs in technique and is independent of \cite{SC:complexOfPartialBases}.

Furthermore, Maazen \cite{Maa:Homologystabilitygeneral} showed that a $\on{GL}_n$-version of $\PartialBases_n$ is Cohen--Macaulay;
Harer \cite{Har:Stabilityhomologymapping} proved that a version related to the mapping class group of a surface is highly connected (cf.~the introduction of \cite{DP:complexpartialbases});
Putman studied a version related to the integral symplectic group $\Sp{2n}{\mbZ}$ \cite{Brueck2023,Put:infinitepresentationTorelli}.
For an overview of how the connectivity of these complexes can be used to compute high-degree cohomology of the corresponding groups, see \cite{Brueck2024a}.

\subsection*{Acknowledgements}
We thank the anonymous referees for comments that helped to improve the exposition of this article.
The first author was supported by the Deutsche Forschungsgemeinschaft (Project 427320536–SFB 1442) and Germany’s Excellence Strategy grant EXC 2044–390685587.
The second author was supported by the Alexander von Humboldt Stiftung, and the FWO grant 12K1223N.

\section{Posets of graphs}
\label{sec_posets_of_graphs}

For the proof of \cref{thm_connectivity_free_factor_systems} in \cref{sec_proof_frames}, we compare $\FFS(A,\mcA)$ with a poset of free splittings of $A$, and study connectivity of fibres. These fibres coincide with certain finite posets of graphs, which we show to be highly connected in the present section.

\subsection{Posets and wedges of spheres}
\label{sec_poset_notation}

We start out by recalling some topological concepts.

We consider a point to be a (trivial) wedge of $d$-spheres for all $d\in \mbZ$, and for $d<0$, we consider a non-contractible wedge of $d$-spheres to be the same as the empty set.

Let $m$ be an integer.
By convention, every topological space is $m$-connected if $m\leq -2$.
A topological space $X$ is $(-1)$-connected if it is non-empty, and for $m\geq 0$, $X$ is $m$-connected if the homotopy groups of $X$ of degree $\leq m$ vanish regardless of the basepoint.
A continuous map $f:X\to Y$ is said to be $m$-connected if it induces an isomorphism between homotopy groups of degree $<m$, and an epimorphism in degree $m$ (again, regardless of the basepoint).
If $X,Y$ are finite-dimensional CW-complexes, by a slight abuse of language, we will say that a continuous map $f:X\to Y$ is highly connected if $f$ is an $n$-equivalence, where $n$ is either the dimension of $X$ or of $Y$.
When we need to specify the concrete connectivity of the map, we will indicate the grade of the equivalence.

An $n$-dimensional simplicial complex $K$ is spherical if it is $(n-1)$-connected. Equivalently, $K$ has the homotopy type of a wedge of $n$-spheres.
We say that $K$ is Cohen--Macaulay if for every simplex $\sigma\in K$, its link $\lk_K(\sigma) = \{\tau\in K\tq \tau\cup \sigma\in K, \tau\cap\sigma = \emptyset\}$ is spherical of dimension $\dim K - \dim\sigma - 1$ (and in particular $K$ is spherical by taking $\sigma$ the empty simplex).

If $X$ is a poset, we study its topological properties via its order complex.
We call a map $f\colon X \to Y$ between two posets a \emph{poset map} or an \textit{order-preserving map} if $x\leq y$ implies $f(x)\leq f(y)$.
Any order-preserving map between posets induces a simplicial map, and hence a continuous map, between their order complexes.
For two order-preserving maps $f,g:X\to Y$ such that $f\leq g$ (that is, $f(x)\leq g(x)$ for all $x\in X$), the induced maps between the geometric realisations are homotopy equivalent.

If $Y$ is a subposet of $X$, we call $f\colon X \to Y$ \emph{monotone} if either for all $x\in X$ we have $f(x)\leq x$, or for all $x\in X$ we have $f(x)\geq x$. 
By \cite[1.3]{Qui:Homotopypropertiesposet}, a monotone poset map $f:X \to X$ defines a homotopy equivalence from $X$ to its image $f(X)$.
In particular, a poset with a unique maximal or minimal element is contractible.

We say that $X$ is Cohen--Macaulay if its order complex is.
In this case, to check for the Cohen--Macaulay property, one only needs to verify that $X$ is spherical, and that every interval of $X$ is spherical of the right dimension (see, for instance, \cite{Qui:Homotopypropertiesposet}).

\subsection{Graphs}
In what follows, by a graph we mean a $1$-dimensional compact CW-complex with no isolated vertices.
For a graph $G$, we denote by $V(G)$ its set of vertices, and by $E(G)$ its set of edges.
Note that multiple edges and loops are allowed.
By a subgraph we mean an edge-induced subgraph.
That is, subgraphs of $G$ are in one-to-one correspondence with subsets of $E(G)$.
Recall that the valence of a vertex $v \in V(G)$ is the number of connected components obtained by deleting $v$ from a small enough neighbourhood of $v$, i.e.~the number of half-edges adjacent to $v$.
An edge that has an endpoint with valence one is called a \emph{leaf}.
An edge is said to be \emph{separating} if removing any point of its interior disconnects the graph.
In particular, we consider leaves to be separating.
The \emph{rank} of a connected graph $G$, denoted by $\rk(G)$, is the rank of its fundamental group.

A \textit{labelled graph} is a pair $(G,l)$ where $G$ is a graph and $\emptyset\subseteq l \subseteq V(G)$ is a subset of its vertex set.
We call $l$ the set of \emph{labelled vertices} of $G$ and write $k = k(l) = |l|$ for the number of labelled vertices.\footnote{We deviate from the notation in \cite{Bru:buildingsfreefactora} here, where the labelling is a map $l:\ls 1, \ldots, k \rs \to V(G)$. The reason is that we only care about the set of labelled vertices, i.e.~the image of such a map $l$ in the present work.}
A \textit{core subgraph} of a labelled graph $(G,l)$ is a non-empty subgraph such that every vertex of valence one is labelled.
Observe that every labelled graph $(H,l)$ contains a unique maximal core subgraph $\MaxCore{H}$: this is the subgraph obtained by repeatedly removing leaves with non-labelled endpoints.

Let $e\in E(G)$ be an edge of a labelled graph $(G,l)$.
We write $G\setminus e$ for the subgraph of $G$ on $E(G)\setminus \{e\}$ and $G/e$ for the graph obtained from $G$ by collapsing $e$.
The graphs $G\setminus e$ and $G/e$ have obvious labellings induced by $l$: the labelling of $G\setminus e$ is given by $l \cap V(G\setminus e)$ and the one of $G/e$ is obtained as the image of $l$ under the quotient map $V(G)\to V(G/e)$.
For example, if $e$ is a leaf with a labelled endpoint of valence one and an unlabelled endpoint of higher valence, then $G/e$ and $G\sm e$ are homeomorphic, but $G/e$ has one more labelled vertex than $G\sm e$ (namely the one coming from the higher valence endpoint of $e$ in $G$).
See \cref{fig:collapseVsQuotient}.

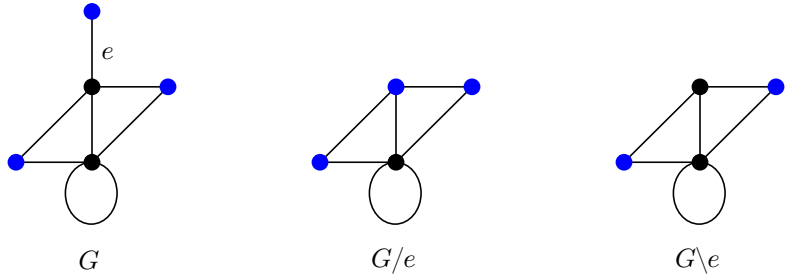
\begin{figure}[ht]
\centering
\begin{tikzpicture}
\draw[draw=black, semithick, solid] (-5.00,-1.00) -- (-4.00,-1.00);
\draw[draw=black, semithick, solid] (-4.00,-1.00) -- (-4.00,0.00);
\draw[draw=black, semithick, solid] (-5.00,-1.00) -- (-4.00,0.00);
\draw[draw=black, semithick, solid] (-4.00,-1.00) -- (-3.00,0.00);
\draw[draw=black, semithick, solid] (-4.00,0.00) -- (-3.00,0.00);
\draw[draw=black, semithick, solid] (-4.00,0.00) -- (-4.00,1.00);
\draw[draw=black, semithick, solid] (-1.00,-1.00) -- (0.00,-1.00);
\draw[draw=black, semithick, solid] (0.00,-1.00) -- (0.00,0.00);
\draw[draw=black, semithick, solid] (0.00,0.00) -- (1.00,0.00);
\draw[draw=black, semithick, solid] (0.00,-1.00) -- (1.00,0.00);
\draw[draw=black, semithick, solid] (0.00,0.00) -- (-1.00,-1.00);
\draw[draw=black, semithick, solid] (3.00,-1.00) -- (4.00,-1.00);
\draw[draw=black, semithick, solid] (4.00,-1.00) -- (5.00,0.00);
\draw[draw=black, semithick, solid] (5.00,0.00) -- (4.00,0.00);
\draw[draw=black, semithick, solid] (4.00,0.00) -- (4.00,-1.00);
\draw[draw=black, semithick, solid] (3.00,-1.00) -- (4.00,0.00);
\draw[draw=black, semithick, solid] (-4.01,-1.41) ellipse (0.34 and -0.41);
\draw[draw=black, semithick, solid] (-0.01,-1.41) ellipse (0.34 and -0.41);
\draw[draw=black, semithick, solid] (3.99,-1.41) ellipse (0.34 and -0.41);

\draw[draw=blue, fill=blue, semithick, solid] (-5.00,-1.00) circle (0.1);
\draw[draw=black, fill=black, semithick, solid] (-4.00,-1.00) circle (0.1);
\draw[draw=black, fill=black, semithick, solid] (-4.00,0.00) circle (0.1);
\draw[draw=blue, fill=blue, semithick, solid] (-4.00,1.00) circle (0.1);
\draw[draw=blue, fill=blue, semithick, solid] (-3.00,0.00) circle (0.1);

\draw[draw=blue, fill=blue, semithick, solid] (-1.00,-1.00) circle (0.1);
\draw[draw=black, fill=black, semithick, solid] (0.00,-1.00) circle (0.1);
\draw[draw=blue, fill=blue, semithick, solid] (0.00,0.00) circle (0.1);
\draw[draw=blue, fill=blue, semithick, solid] (1.00,0.00) circle (0.1);

\draw[draw=blue, fill=blue, semithick, solid] (3.00,-1.00) circle (0.1);
\draw[draw=black, fill=black, semithick, solid] (4.00,-1.00) circle (0.1);
\draw[draw=black, fill=black, semithick, solid] (4.00,0.00) circle (0.1);
\draw[draw=blue, fill=blue, semithick, solid] (5.00,0.00) circle (0.1);

\node[black, anchor=north west] at (-4.3,-2.045) {$G$};
\node[black, anchor=south west] at (-4,0.25) {$e$};
\node[black, anchor=north west] at (-0.45,-2) {$G/e$};
\node[black, anchor=north west] at (3.55,-2) {$G\sm e$};
\end{tikzpicture}

\caption{A graph $(G,l)$ with labelled vertices in blue (left). It has a leaf $e$ whose valence-one vertex is labelled. The graphs $G/ e$ (middle), and  $G\sm e$ (right) are homeomorphic but do not have the same induced labelling.}
\label{fig:collapseVsQuotient}
\end{figure}

\subsection{Posets of core graphs}

Let $\Core(G,l)$ be the poset of proper core subgraphs of a labelled graph $(G,l)$, ordered by inclusion.
Let $\NonTrees(G,l)$ be the poset of proper subgraphs of $G$ such that at least one connected component has non-trivial fundamental group or contains at least two labelled vertices.
The poset $\Core(G,l)$ is contained in $\NonTrees(G,l)$, and indeed $\NonTrees(G,l)$ is the upward-closure of $\Core(G,l)$ in the poset of proper subgraphs of $G$.

\begin{lemma}
\label{lem_X_and_Core}
Let $(G,l)$ be a labelled graph.
Then $\NonTrees(G,l)$ deformation retracts to $\Core(G,l)$.
\end{lemma}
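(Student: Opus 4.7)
The plan is to construct an explicit monotone poset retraction from $\NonTrees(G,l)$ onto $\Core(G,l)$ by sending each subgraph to its maximal core subgraph, and then to invoke the version of Quillen's fibre lemma recalled in \cref{sec_poset_notation}: a monotone poset map is a homotopy equivalence onto its image.

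Concretely, I would define $r\colon \NonTrees(G,l) \to \NonTrees(G,l)$ by $r(H) = \MaxCore{H}$ and verify three things. First, $r$ is well-defined and lands in $\Core(G,l)$. The only non-trivial point is non-emptiness of $\MaxCore{H}$: the iterated leaf-removal process described after the definition of $\MaxCore{\cdot}$ never deletes a labelled vertex and preserves the fundamental group of each component (since collapsing a leaf is a homotopy equivalence). Hence any component of $H$ with non-trivial fundamental group persists as a non-empty core, and any tree component with at least two labelled vertices persists as the non-empty minimal subtree spanning those vertices. By definition of $\NonTrees(G,l)$, at least one component of $H$ is of one of these two types, so $\MaxCore{H}$ is non-empty; properness follows from $\MaxCore{H}\subseteq H \subsetneq G$.

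Second, $r$ is monotone in the sense of \cref{sec_poset_notation}: $r(H)\subseteq H$ by construction. Third, $r$ is order-preserving, i.e.~a poset map: if $H_1\subseteq H_2$ then $\MaxCore{H_1}$ is itself a core subgraph contained in $H_2$, and since $\MaxCore{H_2}$ is the unique maximal such, $\MaxCore{H_1} \subseteq \MaxCore{H_2}$. The image of $r$ equals $\Core(G,l)$ because $r$ restricts to the identity on core subgraphs. Quillen's lemma then gives the claimed deformation retraction. The main technical point is the non-emptiness check for $\MaxCore{H}$, which is where the defining hypothesis of $\NonTrees(G,l)$ is used in an essential way; the remaining verifications are formal.
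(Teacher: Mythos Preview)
Your proposal is correct and follows essentially the same approach as the paper: define the retraction $H\mapsto \MaxCore{H}$, observe it is a monotone poset map restricting to the identity on $\Core(G,l)$, and conclude via the standard fact that such maps are deformation retractions. You supply more detail than the paper does on why $\MaxCore{H}$ is non-empty (this is exactly where the defining condition of $\NonTrees(G,l)$ enters), but the argument is the same.
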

\begin{proof}
This is because every $H\in \NonTrees(G,l)$ contains a unique maximal core subgraph $\MaxCore{H}\in \Core(G,l)$.
The assignment $H\mapsto \MaxCore{H}$ defines a monotone poset map $\NonTrees(G,l) \to \Core(G,l)$ such that $H \geq \MaxCore{H}$
 and restricts to the identity on $\Core(G,l)$. Hence $\NonTrees(G,l)$ deformation retracts to $\Core(G,l)$ (see \cite[Corollary 2.5]{Bru:buildingsfreefactora}).
\end{proof}

In \cite[Proposition 4.14]{Bru:buildingsfreefactora}, it was shown that for $k(l) = 0$, the poset $\NonTrees(G,l)$ is contractible if and only if $G$ has a separating edge.
The following definition is set up in such a way that this result generalises to the cases $k(l)>0$.

\begin{definition}
\label{def_l_separating}
We say that an edge $e$ of $(G,l)$ is \textit{$l$-separating} if it satisfies the following three conditions:
\begin{enumerate}
\item The edge $e$ is separating.
\item \label{it_l_sep_all_on_one_side} One of the components of $G\sm e^\circ$ contains all the labelled vertices.
\item \label{it_l_sep_in_X} The graph $G\sm e$ has at least one connected component that has non-trivial fundamental group or contains at least two labelled vertices, i.e.~$G\sm e \in \NonTrees(G,l)$.
\end{enumerate}
\end{definition}

\begin{remark}
\label{rem:l_separating_k_leq_1}
For $k\leq 1$, an edge is $l$-separating if and only if it is separating and $G$ has non-trivial fundamental group.
In particular, the $l$-separating edges of $(G,l)$ are the same as the $\emptyset$-separating edges of the trivially labelled graph $(G,\emptyset)$.
\end{remark}

\begin{lemma}
\label{lem_con_X}
Let $(G,l)$ be a connected labelled graph with $k = k(l)$ labelled points and $\rk(G)=n$. Let
\begin{equation*}
d(G,l) = 
\begin{cases}
n+k-3 & \text{if }k\geq 1,\\
n-2 & \text{if }k = 0.
\end{cases}
\end{equation*}
Then $\NonTrees(G,l)$ is homotopy equivalent to a wedge of $d(G,l)$-spheres. It is contractible if and only if there is an $l$-separating edge.
\end{lemma}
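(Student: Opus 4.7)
The plan is to induct on the number of labelled vertices $k = |l|$. The base case $k \leq 1$ is handled by \cref{rem:l_separating_k_leq_1} together with \cite[Proposition 4.14]{Bru:buildingsfreefactora}: when $k \leq 1$, no component of any subgraph contains two labelled vertices, so the conditions defining $\NonTrees(G, l)$ reduce to the unlabelled ones, giving $\NonTrees(G, l) = \NonTrees(G, \emptyset)$ as posets with the same $l$-separating edges. The conclusion for $k = 1$ then matches $d(G, l) = n - 2$, the unlabelled dimension. All the genuinely new content is in the case $k \geq 2$.

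For the contractibility direction when an $l$-separating edge $e$ exists, let $G_1, G_2$ be the two components of $G \sm e^\circ$, with $G_1$ containing all labelled vertices. Condition (3) of \cref{def_l_separating} gives $G \sm e \in \NonTrees(G, l)$, so this is a maximum of the downward-closed subposet $\NonTrees(G, l)_{\not \ni e}$, making it contractible. The plan is to construct a monotone retraction $r \colon \NonTrees(G, l) \to \NonTrees(G, l)_{\not \ni e}$ by $H \mapsto H \cap (G \sm e)$, and to use condition (2) (all labels on the $G_1$-side) together with $\MaxCore{H \cap (G \sm e)}$ if needed to verify the image still satisfies the component condition of $\NonTrees$. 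Since $r \leq \id$ is a monotone poset map landing in a contractible target, the composition gives the desired contraction.

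For the inductive step with $k \geq 2$ and no $l$-separating edge, the plan is to fix a labelled vertex $v \in l$ and compare $\NonTrees(G, l)$ with the smaller poset $\NonTrees(G, l \sm \{v\})$, which has dimension $d(G, l \sm \{v\}) = d(G, l) - 1$ (for $k \geq 2$) and is known inductively to be a wedge of $(d(G,l)-1)$-spheres. The inclusion $\NonTrees(G, l \sm \{v\}) \hookrightarrow \NonTrees(G, l)$ is downward-closed, and its complement consists of forest subgraphs $H$ whose component containing $v$ has exactly one additional labelled vertex $w \in l \sm \{v\}$, with all other components having at most one label. I plan to stratify these extra elements by the pair $(w, T)$, where $T$ is the unique embedded arc from $v$ to $w$ inside $H$, and show that for each fixed $(w, T)$ the attaching subposet to $\NonTrees(G, l \sm \{v\})$ is a cone, so that the extras contribute a suspension (up to wedge summand) that shifts the dimension from $d(G, l) - 1$ to $d(G, l)$. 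A Mayer--Vietoris or pushout argument then assembles the homotopy type as a wedge of $d(G, l)$-spheres.

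The main obstacle will be the explicit gluing analysis in this inductive step: I need to show that the attaching of each family of "extra" forest subgraphs to the smaller poset produces exactly a suspension/wedge behaviour rather than a more complicated homotopy type, and that the wedge count comes out correctly. A secondary difficulty is the transition $k = 2 \to k = 1$, where the dimension formula $d(G, l)$ changes character between the labelled and unlabelled regimes, and one must verify that the comparison with the unlabelled result is compatible with the wedge computation. Finally, verifying non-contractibility when no $l$-separating edge exists requires showing that the inductive wedge is non-trivial, which ultimately reduces to exhibiting at least one minimal cycle or label-connecting arc that cannot be collapsed.
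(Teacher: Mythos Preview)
Your overall strategy differs from the paper's: you induct on the number of labels $k$, while the paper inducts on the number of edges and handles all $k$ simultaneously (splitting into a ``loop'' case, which gives a genuine suspension over $\NonTrees(G\sm e,l)$, and a ``non-loop'' case, which compares with $\NonTrees(G/e,\tilde l)$ and then attaches the star of $G\sm e$). Before comparing further, there are concrete problems with your plan.

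First, the base case is not covered by the citation: \cite[Proposition 4.14]{Bru:buildingsfreefactora} establishes the $k=0$ statement only for graphs whose vertices all have valence $\geq 3$ (and $n\ge 2$); the present lemma is stated for arbitrary connected labelled graphs, so you still owe a reduction. Second, the claim that $\NonTrees(G,l\sm\{v\})\hookrightarrow \NonTrees(G,l)$ is downward-closed is false --- it is \emph{upward}-closed. For the theta graph with both vertices labelled, $\NonTrees(G,l)$ is all nonempty proper edge sets, while $\NonTrees(G,l\sm\{v\})$ consists only of the three two-edge subgraphs; each of these sits above single edges which lie in $\NonTrees(G,l)$ but not in $\NonTrees(G,l\sm\{v\})$. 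Your ``complement'' is the downward-closed piece, so the attaching picture is reversed from what you describe. Third, and more seriously, deleting a label can \emph{create} an $l'$-separating edge: take two disjoint triangles joined by a bridge, with one labelled vertex in each triangle. Then $(G,l)$ has no $l$-separating edge, but after dropping one label the bridge becomes $l'$-separating and $\NonTrees(G,l')$ is contractible. In that situation your ``wedge of $(d(G,l)-1)$-spheres'' is a point, and no suspension of it produces the non-trivial wedge of $d(G,l)$-spheres you need; the spheres would have to come entirely from the combinatorics of the $(w,T)$-strata and their mutual interaction, which your outline does not address. The paper sidesteps all of this by inducting on edges, where the passage $G\rightsquigarrow G\sm e$ or $G\rightsquigarrow G/e$ never introduces new $l$-separating edges.
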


To explain the case distinction, note that if $k(l) = 1$, then $\NonTrees(G,l) = \NonTrees(G,\emptyset)$.
In \cite[Proposition 4.14]{Bru:buildingsfreefactora}, \cref{lem_con_X} was deduced from \cite[Proposition 2.2]{Vog:Localstructuresome} for the case $k=0$, $n\geq 2$ under the assumption that every vertex of $G$ has valence at least three.

\subsection*{Proof of \texorpdfstring{\cref{lem_con_X}}{Lemma 2.3}}
Suppose first that $G$ contains an $l$-separating edge $e$. We want to show that $\NonTrees(G,l)$ is contractible.
Let $H\in \NonTrees(G,l)$, i.e.~it has at least one connected component that has trivial fundamental group or contains at least two labelled vertices. We claim that the same is true for $H\sm \ls e \rs$. If $e\not \in E(H)$, this is clear. So assume $e \in E(H)$. Note that $e\not\in \NonTrees(G,l)$, so $H\sm \ls e \rs\neq \emptyset$. If $H$ has a connected component with non-trivial fundamental group, the same is true for $H\sm \ls e \rs$ as $e$ is separating in $H$. On the other hand, if $H$ has a component with at least two labelled vertices, then by \cref{it_l_sep_all_on_one_side} of \cref{def_l_separating}, the same is true for $H\sm \ls e \rs$. Hence $H\sm \ls e \rs$ is an element of $\NonTrees(G,l)$, as claimed.
Using this, the assignment $H\mapsto H\sm \ls e \rs$ gives a monotone poset map $\NonTrees(G,l) \to \NonTrees(G,l)$, so a homotopy equivalence to its image. Using \cref{it_l_sep_in_X} of \cref{def_l_separating}, $G \sm e $ is an element of $\NonTrees(G,l)$, so it lies in this image. It clearly forms a maximal element, so the image and hence $\NonTrees(G,l)$ are contractible. 

Now assume that $G$ contains no $l$-separating edges.
We prove that $\NonTrees(G,l)$ is a non-trivial wedge of $d(G,l)$-spheres by induction on the number of edges of $G$. If $G$ has just a single edge, then this edge is not $l$-separating and $\NonTrees(G,l)$ is empty, so in particular non-contractible (this is true independently of whether the edge forms a loop and whether it has labelled endpoints or not). Hence, the claim is true.
In what follows, we will assume that there are at least two edges.

\subsubsection*{Removing loops}
Suppose that $G$ contains a loop $e$.
We claim that $G \sm e$ is an element of $\NonTrees(G,l)$.
As $G$ has at least two edges, $G \sm e$ is non-empty.
If $G \sm e$ was a tree with only one labelled vertex, then also $G$ has just one labelled vertex and all edges except $e$ are separating. But by \cref{rem:l_separating_k_leq_1}, they are $l$-separating, which contradicts our assumption.
Hence, $G \sm e$ either has non-trivial fundamental group or at least two labelled vertices, which proves that $G \sm e\in \NonTrees(G,l)$.

Using this, the poset $\NonTrees(G,l)$ can be written as the union of two subposets $X_1$ and $X_2$ as follows.
Let
\begin{equation*}
	X_1 = \NonTrees(G\sm e,l')\cup \ls G \sm e \rs,
\end{equation*}
where $l'$ is the labelling induced by $l$.
This subposet has $G \sm e$ as its unique maximal element and hence is contractible. The second subposet is given by $\NonTrees(G\sm e,l')$ together with all proper subgraphs of $G$ containing $e$ (such a subgraph is always contained in $\NonTrees(G,l)$ because it contains the loop $e$ and hence has non-trivial fundamental group). That is,
\begin{equation*}
	X_2 = \NonTrees(G \sm e,l') \cup \ls H\subsetneq G \mid e\in H \rs.
\end{equation*}
This poset is contractible as well: The assignment $H\mapsto H\cup\ls e \rs$ gives a monotone poset map $X_2\to X_2$ and its image is contractible because it has a unique minimal element $\ls e\rs$.
Both $X_1$ and $X_2$ are downward closed, so 
\begin{equation*}
	|\NonTrees(G,l)| = |X_1|\cup_{|X_1\cap X_2|} |X_2|, \text{ where } X_1 \cap X_2 = \NonTrees(G\sm e,l').
\end{equation*}
Thus $\NonTrees(G,l)$ is homotopy equivalent to the topological suspension of $\NonTrees(G\sm e, l')$.
As $G$ has no $l$-separating edge, also $G\sm e$ has no $l'$-separating edge. The graph $G\sm e$ is connected, so by induction, $\NonTrees(G\sm e,l')$ is homotopy equivalent to a non-trivial wedge of $d(G\sm e,l')$-spheres. Furthermore, $G\sm e$ has rank one less than $G$ but the same number of labelled points, i.e.~$\rk(G\sm e) = n-1$ and $k(l')=k$. Hence, $d(G\sm e,l')=d(G,l)-1$. This implies the claim for $\NonTrees(G,l)$.

\subsubsection*{Loop-free graphs}
By the previous paragraph, we can now assume that $G$ contains no loops.
If $G$ is a tree, let $e$ be a leaf of $G$. Otherwise, let $e$ be a non-separating (and non-loop) edge. Note that in either case, $G\sm e$ is connected. We will prove the two cases in parallel.
Let $Y_1:= \NonTrees(G,l) \sm \ls G \sm e \rs$ and denote by $(G\sm e,l')$ the labelled graph obtained from $G$ by removing $e$.

\begin{claim}
\label[claim]{claim_Y_1}
The poset $Y_1$ is homotopy equivalent to a wedge of $d(G,l)$-spheres. If $Y_1$ is contractible, then $Y_1 \neq \NonTrees(G,l)$ and $G\sm e$ has no $l'$-separating edge.
\end{claim}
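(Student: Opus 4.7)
The plan is to deformation retract $|Y_1|$ onto its subposet of subgraphs that contain $e$, identify the latter with an auxiliary poset of subgraphs of $G\sm e$, and then apply the inductive hypothesis of \cref{lem_con_X} to $G\sm e$, which has strictly fewer edges.

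Concretely, I would define the map $g\colon Y_1 \to Y_1$ by $g(H) = H \cup \ls e \rs$. This is well-defined: the defining condition of $\NonTrees(G,l)$ is preserved under adding edges; $H \cup \ls e \rs \subsetneq G$ because $H \neq G\sm e$; and $H \cup \ls e \rs \neq G\sm e$ because it contains $e$. The map is order-preserving with $g(H) \supseteq H$, so $|Y_1|$ deformation retracts onto $|U_2|$ where $U_2 := \ls H \in Y_1 \tq e \in E(H)\rs$. The bijection $H \mapsto H \sm \ls e \rs$ is then a poset isomorphism from $U_2$ to the auxiliary poset $Q := \ls H' \subsetneq G\sm e \tq H' \cup \ls e \rs \in \NonTrees(G,l)\rs$; the empty subgraph lies in $Q$ precisely when both endpoints of $e$ are labelled in $l$, and $\NonTrees(G\sm e, l') \subseteq Q$ is an upward-closed subposet.

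By induction, $\NonTrees(G\sm e, l')$ is a wedge of $d(G\sm e, l')$-spheres, contractible iff $G\sm e$ has an $l'$-separating edge. I would then split into cases by the labels of the endpoints $v, w$ of $e$: if both are labelled, $\emptyset$ is a minimum of $Q$ and $Q$ is contractible; otherwise, the extra piece $Q \sm \NonTrees(G\sm e, l')$ consists of those $H' \subseteq G\sm e$ whose components are trees with at most one labelled vertex, but for which $H' \cup \ls e \rs$ acquires either a cycle (when $v, w$ lie in the same component of $H'$) or a merged component with two labels (when $e$ joins the $v$- and $w$-components of $H'$). A secondary monotone retraction onto canonical representatives of these configurations, together with the numerical identity $d(G\sm e, l') = d(G,l) - 1$ in the relevant situations, should deliver the wedge-of-$d(G,l)$-spheres conclusion for $Y_1 \simeq Q$.

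For the second statement, assume $Y_1$ is contractible. The case analysis above forces $G\sm e \in \NonTrees(G,l)$, and hence $Y_1 \neq \NonTrees(G,l)$. For the no-$l'$-separating-edge assertion, I would argue by contradiction: if $f \in E(G\sm e)$ were $l'$-separating, then $f$ would also be $l$-separating in $G$, contradicting the standing hypothesis of \cref{lem_con_X}. Indeed, unless $e$ bridges the two components of $G\sm e \sm f^\circ$, the edge $f$ remains separating in $G$ with all labels on one side; and if $e$ does bridge them, then the labelling condition together with the contractibility of $Q$ forces both endpoints of $e$ to lie on the side containing the labels, which contradicts the bridging. The main obstacle I foresee is the combinatorial analysis of $Q \sm \NonTrees(G\sm e, l')$ in the partially labelled cases, where pinning down the homotopy type and the way it glues to $\NonTrees(G\sm e, l')$ requires a delicate secondary retraction or a Mayer--Vietoris style cover; the dimension counts must also be verified separately in each labelling scenario.
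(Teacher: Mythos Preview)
Your retraction $Y_1\to U_2$ via $H\mapsto H\cup\{e\}$ and the identification $U_2\cong Q$ are correct, and when both endpoints of $e$ are labelled your argument coincides with the paper's: $\emptyset\in Q$ is a minimum, so $Y_1$ is contractible, and one then checks directly (as the paper does) that $G\sm e\in\NonTrees(G,l)$.

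The genuine gap is in the remaining case, where at most one endpoint of $e$ is labelled. Comparing $Q$ to $\NonTrees(G\sm e,l')$ introduces a dimension shift: by induction the latter is a wedge of $(d(G,l)-1)$-spheres, so the extra stratum $Q\sm\NonTrees(G\sm e,l')$ would have to contribute exactly one suspension. This stratum consists of all forests in $G\sm e$ with at most one label per component for which adding $e$ creates a cycle or a doubly-labelled component; its shape depends on the global structure of $G\sm e$, and your ``secondary monotone retraction'' is left unspecified. As written, this part is incomplete.

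The paper sidesteps the issue by \emph{collapsing} $e$ rather than deleting it. When at most one endpoint is labelled, it exhibits poset maps in both directions between $Y_1$ and $\NonTrees(G/e,\tilde{l})$ (send $H$ to its image in $G/e$; send $K$ back to $K$ or $K\cup\{e\}$ according to whether $v_e\in V(K)$) that are mutually inverse up to monotone homotopy. Since $e$ is not a loop and has at most one labelled endpoint, $\rk(G/e)=\rk(G)$ and $k(\tilde{l})=k(l)$, so $d(G/e,\tilde{l})=d(G,l)$ and induction applies with no shift. Moreover $G/e$ inherits no $\tilde{l}$-separating edge, so this case yields a \emph{non}-contractible wedge; hence contractibility of $Y_1$ forces both endpoints of $e$ to be labelled, after which your argument for the ``no $l'$-separating edge'' clause goes through. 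In fact your poset $Q$ is canonically isomorphic to $\NonTrees(G/e,\tilde{l})$ via the edge bijection $E(G\sm e)\leftrightarrow E(G/e)$, so the fix is precisely to replace your comparison target $\NonTrees(G\sm e,l')$ by $\NonTrees(G/e,\tilde{l})$.
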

\begin{proof}
To prove this, we consider two cases.
The first is that $e$ has both endpoints labelled. We claim that in this case, $Y_1$ is contractible and different from $\NonTrees(G,l)$.
As $G$ has more than one edge and both endpoints of $e$ are labelled, we have $\ls e \rs \in Y_1$. We get a monotone map $Y_1 \to Y_1$ by the assignment $H\mapsto H\cup \ls e\rs$. Its image has a unique minimal element $\ls e\rs$, so $Y_1$ is contractible.
It remains to show that $Y_1$ is different from $\NonTrees(G,l)$, i.e.~that $G \sm e \in \NonTrees(G,l)$.
Recall that $G \sm e$ is connected and that either $G$ is a tree with $e$ a leaf or $e$ is non-separating.
If $e$ is non-separating, then $G \sm e$ still contains both (labelled) endpoints of $e$. Hence, we have indeed  $G \sm e \in\NonTrees(G,l)$. If, on the other hand, $G$ is a tree and $e$ is a leaf, the assumption that $G$ has no $l$-separating edge implies that there must be at least one labelled vertex that is not an endpoint of $e$. Hence, $G \sm e$ is a connected graph with at least two labelled vertices and we again have $G \sm e \in\NonTrees(G,l)$.

The second case is that at least one of the endpoints of $e$ is unlabelled. 
In this case, for all $H\in Y_1$, the image of $H$ in $G/e$ is also an element of $\NonTrees(G/e,\tilde{l})$, where $\tilde{l}$ is the labelling induced by $l$. (This is because collapsing $e$ does not change the fundamental group as $e$ is not a loop and also does not change the number of labelled vertices as at most one endpoint of $e$ is unlabelled.)
Hence, we get a poset map
\begin{align*}
f\colon Y_1 &\to \NonTrees(G/e,\tilde{l})
\end{align*}
that sends a subgraph to its image in the quotient $G/e$.
We define a poset map in the other direction as follows. Let $v_e$ be the vertex of $G/e$ to which $e$ was collapsed. Then define
\begin{align*}
g\colon\NonTrees(G/e,\tilde{l}) &\to Y_1 \\
K &\mapsto \begin{cases}
K \cup \ls e \rs & v_e\in V(K), \\
K & \text{otherwise}.
\end{cases} 
\end{align*}
Here we have identified $K$ with the set of corresponding edges in $G$.
We have $g\circ f (H) \supseteq H$ and $f\circ g (K)= K$. Hence, $Y_1$ and $\NonTrees(G/e, \tilde{l})$ have the same homotopy type (see \cref{sec_poset_notation}).
The graph $G/e$ has no $\tilde{l}$-separating edges (since $(G,l)$ has no $l$-separating edge), so by induction, we see that $Y_1 \simeq \NonTrees(G/e,\tilde{l})$ is homotopy equivalent to a non-trivial wedge of $d(G/e,\tilde{l})$-spheres.
Since $e$ is not a loop, we have $\rk(G/e) = \rk(G)$. We also have $k(\tilde{l}) = k(l)$ because not both endpoints of $e$ are labelled.
Hence, $d(G/e,\tilde{l}) = d(G,l)$.
\end{proof}

Now we use this subposet to show the desired connectivity for $\NonTrees(G,l)$.
If $Y_1  = \NonTrees(G,l)$ then the result immediately follows from Claim \ref{claim_Y_1}.
Thus we may assume that $G\sm e\in \NonTrees(G,l)$.
It follows that $\NonTrees(G,l)$ is obtained from $Y_1 = \NonTrees(G,l) \sm \ls G \sm e \rs$ by attaching the star of $G\sm e$ along its link $\lk(G\sm e)$.
That is, if $Y_2$ is the downward closure of $G\sm e$ in $\NonTrees(G,l)$ (i.e. the lower interval $\NonTrees(G,l)_{\subseteq G\sm e}$), then
\begin{equation*}
	|\NonTrees(G,l)| = |Y_1|\cup_{|Y_1\cap Y_2|} |Y_2|, \text{ where } Y_1\cap Y_2 = \NonTrees(G,l)_{\subsetneq G\sm e}.
\end{equation*}
Here, $\lk(G\sm e)$ is the order complex of $\NonTrees(G,l)_{\subsetneq G\sm e}$.
The poset $Y_2$ is contractible as it has the unique maximal element $G\sm e$, and $Y_1$ is homotopy equivalent to a wedge of $d(G,l)$-spheres by Claim \ref{claim_Y_1}.
Hence, the following claim implies that $\NonTrees(G,l)$ is homotopy equivalent to a non-trivial wedge of $d(G,l)$-spheres and finishes the proof of \cref{lem_con_X}.
\begin{claim}
The poset $\NonTrees(G,l)_{\subsetneq G\sm e}$ is homotopy equivalent to a wedge of $(d(G,l)-1)$-spheres and either $\NonTrees(G,l)_{\subsetneq G\sm e}$ or $Y_1$ are non-contractible.
\end{claim}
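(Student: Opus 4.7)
The plan is to identify the strict lower interval $\NonTrees(G,l)_{\subsetneq G\sm e}$ with $\NonTrees(G\sm e, l')$ for the induced labelling $l' = l \cap V(G\sm e)$, and then invoke the inductive hypothesis of \cref{lem_con_X}. Note that any proper subgraph $H$ of $G\sm e$ is also a proper subgraph of $G$ not containing $e$, and membership in $\NonTrees$ only depends on which vertices of $H$ are labelled. Since $V(H) \subseteq V(G\sm e)$, the conditions using $l$ or $l'$ coincide, so as posets
\[
\NonTrees(G,l)_{\subsetneq G\sm e} \;=\; \NonTrees(G\sm e, l').
\]
Since $G\sm e$ is connected by the choice of $e$ and has strictly fewer edges than $G$, the inductive hypothesis applies and shows that $\NonTrees(G\sm e, l')$ is homotopy equivalent to a wedge of $d(G\sm e, l')$-spheres, and that this wedge is non-contractible precisely when $G\sm e$ has no $l'$-separating edge.

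Next I would check that $d(G\sm e, l') = d(G,l)-1$. In the non-tree subcase, $e$ was chosen to be a non-loop, non-separating edge, so $\rk(G\sm e)=n-1$; both endpoints of $e$ have valence at least two (otherwise $e$ would be a leaf and hence separating), hence no vertex is lost when $e$ is deleted, so $l' = l$ and $k(l')=k$. Plugging these values into the definition of $d$ gives $d(G\sm e,l')=d(G,l)-1$ in both the $k=0$ and $k\geq 1$ branches. In the tree subcase, $e$ is a leaf, so $\rk(G\sm e)=0=\rk(G)$. Under the standing assumption that $G$ has no $l$-separating edge, the valence-one endpoint of $e$ must be labelled: otherwise $e$ would be separating with all labelled vertices on the remaining side and, since we are analysing the situation in which $G\sm e\in\NonTrees(G,l)$, condition \cref{it_l_sep_in_X} of \cref{def_l_separating} would also be satisfied, making $e$ an $l$-separating edge. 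The same use of $G\sm e\in\NonTrees(G,l)$ forces $k(l')=k-1\geq 2$, so $d(G\sm e,l')=0+(k-1)-3=d(G,l)-1$.

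Finally, for the non-contractibility alternative, I would invoke \cref{claim_Y_1}: if $Y_1$ is contractible, then that claim guarantees that $G\sm e$ has no $l'$-separating edge, so by the inductive hypothesis applied to $(G\sm e, l')$, the poset $\NonTrees(G\sm e, l') = \NonTrees(G,l)_{\subsetneq G\sm e}$ is a non-trivial wedge of $(d(G,l)-1)$-spheres and hence non-contractible. Thus at least one of $Y_1$ and $\NonTrees(G,l)_{\subsetneq G\sm e}$ is non-contractible, finishing the proof of the claim and, via the Mayer--Vietoris decomposition set up before it, of \cref{lem_con_X}. The only real subtlety, rather than a true obstacle, is the bookkeeping in the tree subcase where one has to exploit the hypothesis that $G$ has no $l$-separating edge in order to both ensure that the endpoint of $e$ is labelled and that the dimension shift by one holds; everything else is a formal consequence of the identification with $\NonTrees(G\sm e,l')$ and induction.
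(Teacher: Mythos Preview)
Your proposal is correct and follows essentially the same route as the paper: identify the lower interval with $\NonTrees(G\sm e,l')$, apply induction, compute $d(G\sm e,l')=d(G,l)-1$ separately in the non-separating and leaf subcases, and use \cref{claim_Y_1} for the non-contractibility alternative. The only minor difference is bookkeeping in the tree subcase: the paper covers the (vacuous under the standing hypothesis $G\sm e\in\NonTrees(G,l)$) case $k\leq 1$, $n=0$ separately, whereas you use that hypothesis directly to force $k(l')\geq 2$, which is a clean shortcut.
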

\begin{proof}
Note that $\NonTrees(G,l)_{\subsetneq G\sm e}$ is isomorphic to $\NonTrees(G\sm e,l')$, where $l'$ is the labelling induced by $l$.
Since $G\sm e$ is connected, by induction, $\NonTrees(G\sm e,l')$ is homotopy equivalent to a wedge of $d(G\sm e,l')$-spheres, and it is not contractible if $G\sm e$ contains no $l'$-separating edge. By Claim \ref{claim_Y_1}, either this is the case or $Y_1$ is non-contractible. It remains to show that either $d(G\sm e,l')$ and $d(G,l)-1$ are equal or both are smaller than $0$.

Here, we need to distinguish the cases where $e$ is either non-separating or a leaf. If $e$ is non-separating, then $\rk(G\sm e) = \rk(G)-1$ and $k(l')=k$. Hence the equality $d(G\sm e,l') = d(G,l)-1$ holds.
Now assume that $e$ is a leaf. If $k\leq 1$ and $n=0$, then $\NonTrees(G\sm e,l') = \emptyset$. This is consistent with $d(G,l) = -2$.
Otherwise, $(G,l)$ not having $l$-separating edges implies that the valence-one vertex of $e$ is labelled and that $k\geq 2$. Hence, we have $k(l') = k-1\geq 1$ and $\rk(G\sm e) = \rk(G)$, so we again get equality $d(G\sm e,l') = d(G,l)-1$.
\end{proof}

\section{Proof of the main theorems}
\label{sec_proof_frames}

In this section, we prove \cref{thm_connectivity_frames}, \cref{thm_CM_frames_intro} and \cref{thm_connectivity_free_factor_systems}.
We will frequently use Quillen's fibre theorem as stated in \cite[Theorem 5.3]{PW} to show that certain maps are highly connected
(that is, they induce isomorphism between the homotopy groups up to a large enough degree).

Throughout the section, we fix a finitely generated group $A$ and a (possibly empty) free factor system $\mcA$ of $A$
, and write $\FFS = \FFS(A,\mcA)$.
We set
\begin{equation*}
	n = \corank(\mcA) \text{ and } k =  |\mcA|.
\end{equation*}
We denote by $\FreeS = \FreeS(A,\mcA)$ the poset of free splittings of $A$ relative to $\mcA$.
Its elements can be seen as pairs $S = (\mathbb{G}, \mathfrak{m})$, where $\mathbb{G}$ is a graph of groups with underlying labelled graph a connected core graph in the sense of \cref{sec_posets_of_graphs}, and where $\mathfrak{m}\colon A\to \pi_1(\mathbb{G})$ is an isomorphism which is well-defined up to inner automorphisms.
The poset relation is given by saying that $S>S'$ if $S'$ is obtained from $S$ by collapsing a subgraph.
For $S\in \FreeS$, we denote by $\mcV(S)$ the free factor system given by the conjugacy classes of its vertex groups. By definition, we have $\mcV(S)\in\FFS \cup \{ \mcA\}$, i.e.~$\mcA\sqsubseteq \mcV(S)$.
We refer the reader to \cite[Section 4.1]{Bru:buildingsfreefactora} for more detailed definitions.

We set $ L = L(A,\mcA) = \{ S \in \FFS \tq \mcV(S) = \mcA \}$ and $\bFreeS = \FreeS \sm L$.
Here, $L$ is the spine of relative Outer space $\mathcal{O}(A,\mcA)$ in the sense of \cite{GL:outerspacefree}, and $\bFreeS$ is its simplicial boundary.
This simplicial boundary has the homotopy type of the poset $\FFS$ that we are interested in:

\begin{lemma}
\label{lem_bFreeS_FFS}
We have a homotopy equivalence $\mcV :\bFreeS \to \FFS^{\op}$.
\end{lemma}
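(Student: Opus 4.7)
The plan is to apply Quillen's fibre theorem in the form of \cite[Theorem 5.3]{PW} to the poset map $\mcV\colon \bFreeS \to \FFS^{\op}$. The first check is that $\mcV$ is order-preserving: if $S \leq S'$ in $\bFreeS$, then $S$ is obtained from $S'$ by collapsing a connected subgraph into a vertex, an operation that only enlarges vertex groups. Hence $\mcV(S) \sqsupseteq \mcV(S')$ in $\FFS$, equivalently $\mcV(S) \leq \mcV(S')$ in $\FFS^{\op}$.

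Next I would identify each fibre. For $\mcA' \in \FFS$, the fibre
\begin{equation*}
\mcV^{-1}\!\bigl(\FFS^{\op}_{\leq \mcA'}\bigr) = \{\, S \in \bFreeS \colon \mcV(S) \sqsupseteq \mcA' \,\}
\end{equation*}
consists of those splittings $S$ in which every element of $\mcA'$ is elliptic, so it is exactly the poset $\FreeS(A, \mcA')$ of free splittings of $A$ relative to the larger system $\mcA'$. Since $\mcA \sqsubsetneq \mcA'$ for every $\mcA' \in \FFS(A,\mcA)$, the condition $\mcV(S) \sqsupseteq \mcA'$ already forces $\mcV(S) \neq \mcA$, so every such $S$ automatically lies in $\bFreeS = \FreeS \setminus L$ and the truncation has no effect on the fibre.

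It then suffices to invoke contractibility of $\FreeS(A, \mcA')$. This poset deformation retracts onto the spine $L(A, \mcA')$ of the relative Outer space $\mathcal{O}(A, \mcA')$ of Guirardel--Levitt \cite{GL:outerspacefree}, and contractibility of that spine (a relative version of Culler--Vogtmann's theorem, also recorded in \cite{Bru:buildingsfreefactora}) is the input needed. With all fibres contractible, Quillen's fibre theorem yields that $\mcV$ is a homotopy equivalence.

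The main technical point to watch is guaranteeing this contractibility in the full generality required: for $A = F_n$ and $\mcA = \emptyset$ it is classical and follows from \cite{Hat:Homologicalstabilityautomorphism}, while for an arbitrary finitely generated freely decomposable $A$ and an arbitrary proper relative factor system $\mcA'$ one needs the relative version, most cleanly cited from \cite{Bru:buildingsfreefactora}. Verifying that the formulation of $\FreeS(A, \mcA')$ there matches the one used here, and that the deformation retraction respects the poset structure so as to give a statement at the level of order complexes, is the routine bookkeeping I anticipate.
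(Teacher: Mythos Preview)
Your argument is correct and is exactly the content of \cite[Proposition~4.30.1]{Bru:buildingsfreefactora}, which the paper merely cites without reproducing: one applies Quillen's Theorem~A to $\mcV$, identifies the lower fibre over $\mcA'\in\FFS^{\op}$ with the relative free splitting poset $\FreeS(A,\mcA')$, and invokes its contractibility.

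Two small imprecisions worth flagging. First, the subgraph collapsed in the poset relation on $\FreeS$ need not be connected; this does not affect your monotonicity check. Second, the contractibility of $\FreeS(A,\mcA')$ is not obtained by a deformation retraction onto the spine $L(A,\mcA')$: the spine is a retract of relative Outer space $\mathcal{O}(A,\mcA')$, not of its simplicial completion $\FreeS(A,\mcA')$, which contains additional boundary simplices. Contractibility of $\FreeS(A,\mcA')$ is instead established directly (via Skora paths or, for free groups, Hatcher's sphere-system argument \cite{Hat:Homologicalstabilityautomorphism}) and is recorded in the relative setting in \cite{Bru:buildingsfreefactora}. You correctly flag this as the technical point to cite carefully, so this is only a matter of pointing to the right statement.
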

\begin{proof}
This is \cite[Proposition 4.30.1]{Bru:buildingsfreefactora}.
\end{proof}

Let $Z\subseteq L\times \bFreeS$ be the subposet consisting of all pairs $(S,S')$ where $S'$ is obtained from $S$ by collapsing a proper core subgraph. We have projections $p_1\colon Z \to L$ and $p_2\colon Z \to \bFreeS$.

\begin{lemma}
\label{lem_p2_equivalence}
The projection $p_2:Z\to \bFreeS$ is a homotopy equivalence.
\end{lemma}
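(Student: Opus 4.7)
My plan is to apply Quillen's fibre theorem \cite[Theorem 5.3]{PW} to the order-preserving projection $p_2 \colon Z \to \bFreeS$: it suffices to show that for each $S' \in \bFreeS$, the fibre
\[
F(S') := p_2^{-1}(\bFreeS_{\geq S'}) = \ls (S, S'') \in Z \tq S'' \geq S' \rs
\]
is contractible.

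The first step would be a retraction. Define $\phi \colon F(S') \to F(S')$ by $\phi(S, S'') = (S, S')$. This is well-defined because $S > S'' \geq S'$ is a composition of proper core subgraph collapses and hence itself such a collapse, so $(S, S') \in Z$. Since $\phi(S, S'') \leq (S, S'')$ coordinate-wise in $L \times \bFreeS$, the map $\phi$ is a downward monotone poset endomap of $F(S')$. By the fact recalled in \cref{sec_poset_notation} that monotone poset maps yield homotopy equivalences onto their images, $F(S')$ is homotopy equivalent to $\im(\phi)$, which is naturally identified with $L_{\geq S'} := \ls S \in L \tq S \geq S' \rs$. Note that the disjointness $L \cap \bFreeS = \emptyset$ forces $L_{\geq S'} = L_{>S'}$, so the retract contains no element equal to $S'$.

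The second and more delicate step is to show that $L_{\geq S'}$ is contractible. An element $S \in L_{\geq S'}$ corresponds to a choice, at each vertex $v$ of the underlying graph of $S'$, of a free splitting of the vertex group $A_v$ whose vertex groups lie in the restricted free factor system $\mcA|_{v}$ of $A_v$ inherited from $\mcA$. These choices combine independently to give a poset isomorphism
\[
L_{\geq S'} \;\cong\; \prod_{v} L(A_v, \mcA|_{v}),
\]
where vertices with $A_v \in \mcA$ contribute a one-point factor, and vertices with $A_v \notin \mcA$ contribute the spine of a relative Outer space of $(A_v, \mcA|_{v})$, which is contractible by Guirardel--Levitt \cite{GL:outerspacefree}. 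A product of contractible posets being contractible, we conclude that $F(S')$ is contractible, and Quillen's fibre theorem then yields that $p_2$ is a homotopy equivalence.

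I expect the main obstacle to be the product decomposition of $L_{\geq S'}$: one has to check carefully, in the graph-of-groups formalism of \cite{Bru:buildingsfreefactora}, that refinements at distinct vertices of $S'$ combine in the desired way and that the resulting bijection respects the collapse-order inherited from $\FreeS$. The contractibility of each factor is standard, but the independent-refinements claim is the key piece of bookkeeping that makes the whole argument go through.
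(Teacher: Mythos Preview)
Your retraction $\phi$ is not well-defined: the claim that the composite collapse $S > S'' \geq S'$ is again a collapse along a \emph{core} subgraph of the underlying graph of $S$ is false in general. Take $A = F_3$, $\mcA = \emptyset$, and let $G$ have two (unlabelled) vertices $u,v$, loops $a,b$ at $u$, a loop $c$ at $v$, and an edge $e$ joining $u$ to $v$; this is the underlying graph of some $S\in L$. Collapsing the loop $a$ (a core subgraph of $G$) yields $S''\in \bFreeS$ with a $\mathbb Z$ vertex group at $u$, so $(S,S'')\in Z$. Collapsing the edge $e$ in $S''$ gives $S'\in\bFreeS$ with $S''> S'$, so $(S,S'')\in F(S')$. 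But the composite $S \to S'$ collapses the subgraph $\{a,e\}\subset G$, in which the unlabelled vertex $v$ has valence~$1$; hence $\{a,e\}$ is not a core subgraph, $(S,S')\notin Z$, and $\phi(S,S'')$ does not land in $Z$.

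A second, smaller issue is the one you flag yourself: the decomposition $L_{>S'} \cong \prod_v L(A_v,\mcA|_v)$ omits the data of where the half-edges of $S'$ incident to $v$ attach in the blown-up graph at $v$, so each factor is really a spine of a relative Outer space with additional marked points. That part is repairable (such spines are still contractible), but the failure of $\phi$ above is the essential gap that blocks the argument as written. The paper itself does not argue here and simply cites \cite[Corollary~4.35]{Bru:buildingsfreefactora}.
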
  
\begin{proof}
This is part of \cite[Corollary 4.35]{Bru:buildingsfreefactora}.
\end{proof}

Now we prove that $p_1$ is highly connected.
Section 4.6.4 of \cite{Bru:buildingsfreefactora} studies this map for the case $A = F_n$ and $\mcA = \emptyset$.
Here we use \cref{lem_con_X} to generalise this to an arbitrary pair $(A,\mcA)$.

\begin{lemma}
\label{lem_p1_equivalence}
The projection $p_1 : Z\to L$ is an $e(n,k)$-equivalence, where
\begin{equation*}
e(n,k) = 
\begin{cases}
n+k-3 & \text{if }k\geq 2,\\
n-1 & \text{if }k = 1,\\
n-2 & \text{if }k = 0.
\end{cases}
\end{equation*}
\end{lemma}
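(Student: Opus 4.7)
The plan is to apply Quillen's fibre theorem in the form of \cite[Theorem 5.3]{PW} to the projection $p_1 \colon Z \to L$, bounding the connectivity of its fibres using the graph-theoretic results of \cref{sec_posets_of_graphs}. The key identification is that the fibres of $p_1$ correspond, up to homotopy, to the posets $\NonTrees(G,l)$ associated with the underlying labelled graphs of splittings $S\in L$.

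Concretely, fix $S_0 \in L$ with underlying labelled graph $(G_0, l_0)$. Because $\mcV(S_0) = \mcA$, we have $\rk(G_0) = n$ and $k(l_0) = k$. An element $(S_0, S') \in Z$ is determined by a proper subgraph $H \subsetneq G_0$ whose collapse produces a splitting in $\bFreeS$; unwinding the definition of $\bFreeS$, this is exactly the condition that some component of $H$ has nontrivial fundamental group or contains at least two labelled vertices, i.e.~$H \in \NonTrees(G_0, l_0)$. Using the monotone retract $H \mapsto \MaxCore{H}$ of \cref{lem_X_and_Core}, the pointwise fibre of $p_1$ over $S_0$ is then homotopy equivalent to $\Core(G_0, l_0)$, hence to $\NonTrees(G_0, l_0)$. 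A standard monotone-poset-map argument should next show that the fibre $p_1^{-1}(L_{\geq S_0})$ (or $L_{\leq S_0}$, depending on which side the fibre theorem is applied) deformation retracts onto the pointwise fibre, so has the same homotopy type.

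Plugging $\rk(G_0) = n$ and $k(l_0) = k$ into \cref{lem_con_X} then gives that the fibre is a wedge of $d(G_0, l_0)$-spheres, with $d(G_0, l_0) = n + k - 3$ for $k \geq 1$ and $d(G_0, l_0) = n - 2$ for $k = 0$. Invoking Quillen's fibre theorem yields that $p_1$ is an $e(n, k)$-equivalence in the regimes $k \geq 2$ (where the fibre is $(n + k - 4)$-connected, giving an $(n+k-3)$-equivalence) and $k = 0$ (where the fibre is $(n - 3)$-connected, giving an $(n-2)$-equivalence), matching the claim in those cases.

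The case $k = 1$ is the main obstacle: \cref{lem_con_X} produces only an $(n - 3)$-connected fibre, which would nominally give just an $(n - 2)$-equivalence for $p_1$, whereas the stated bound is $e(n, 1) = n - 1$. To close this one-degree gap, I would exploit \cref{rem:l_separating_k_leq_1}, which says that for $k \leq 1$ the notion of $l$-separating edge is independent of the labelling; this suggests comparing the $k = 1$ fibre with a $k = 0$ non-tree poset on a graph of rank one higher, via a suspension-type argument that gains the missing degree of connectivity from the labelled vertex. Alternatively, one can attempt to apply a refined version of the fibre theorem that extracts additional connectivity from the dimension of the base $L$ at an $S_0$ with a single labelled vertex. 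Carrying out either refinement cleanly is the step I expect to be the main technical hurdle.
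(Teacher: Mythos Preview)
Your overall strategy matches the paper's: apply Quillen's fibre theorem to $p_1$, identify the fibre over $S\in L$ with $\Core(G,l)\simeq \NonTrees(G,l)$ via \cref{lem_X_and_Core}, and invoke \cref{lem_con_X}. One small correction: by the definition of $Z$ (pairs where $S'$ is obtained by collapsing a proper \emph{core} subgraph), the pointwise fibre $p_1^{-1}(S)$ is literally $\Core(G,l)$, not $\NonTrees(G,l)$; the retract of \cref{lem_X_and_Core} is only used to import the connectivity computed for $\NonTrees$. The reduction $p_1^{-1}(L_{\leq S})\simeq p_1^{-1}(S)$ is handled in the paper by citing \cite[Lemma 4.36]{Bru:buildingsfreefactora}, so your ``standard monotone-poset-map argument'' is on target.

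For $k=1$, the paper follows your option (b): the refined Quillen fibre theorem that asks for connectivity of the join $p_1^{-1}(L_{\leq S}) * L_{>S}$. Your proposal stops short of the decisive observation, however. If $S$ is \emph{not} maximal in $L$, then $L_{>S}\neq\emptyset$ and the join gains a degree for free, giving $(n-2)$-connectivity. If $S$ \emph{is} maximal, one argues that the unique labelled vertex $v$ of $(G,l)$ must have valence one: were it of valence $\geq 2$, one could ``push the label off'' by attaching a new leaf carrying the label at $v$, producing an element of $L$ strictly above $S$ and contradicting maximality. Hence the leaf at $v$ is $l$-separating (using \cref{rem:l_separating_k_leq_1} and $n\geq 1$), so \cref{lem_con_X} gives that $\NonTrees(G,l)$, and thus the fibre, is contractible; the join is then trivially $(n-2)$-connected. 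Your option (a), a suspension comparison with a rank-$(n+1)$ graph at $k=0$, does not appear and is not needed.
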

Note that $e(n,k)$ agrees with $d(G,l)$ from \cref{lem_con_X} except if $k=1$. This case distinction will become clear in the following proof.
\begin{proof}[Proof of \cref{lem_p1_equivalence}]
Let $S\in L$ and let $(G,l)$ be the underlying labelled graph.
Note that $G$ has exactly $k$ labelled vertices and $\rk(G)=n$.
By \cite[Lemma 4.36]{Bru:buildingsfreefactora}, the fibre $p_1^{-1}(L_{\leq S})$ is homotopy equivalent to $p_1^{-1}(S)$. (The proof of \cite[Lemma 4.36]{Bru:buildingsfreefactora} is formulated for $A = F_n$, $\mcA = \emptyset$, but it generalises verbatim to the setup here.)
The latter is exactly the poset $\Core(G,l)$ defined in \cref{sec_posets_of_graphs}. This is $(d(G,l)-1)$-connected by \cref{lem_X_and_Core} and \cref{lem_con_X}. For $k\neq 1$, we have $d(G,l) = e(n,k)$ and the claim directly follows from Quillen's fibre theorem \cite[Theorem 5.3]{PW}.

For $k=1$, we have $d(G,l) = n-2$, so the same argument would give us one degree of connectivity less than claimed. 
Assume we are in this case. We will prove that
\[ p_1^{-1}(L_{\leq S}) * L_{>S} \]
is $(n-2)$-connected for all $S\in L$. Applying Quillen's fibre theorem then gives us the slightly stronger connectivity statement.
Note that $n\neq 0$ since $\mcA \neq \{ [A]\}$. If $L_{>S}$ is non-empty (i.e.~$S$ is not maximal), then $p_1^{-1}(L_{\leq S}) * L_{>S}$ is at least $(n -2)$-connected.
Suppose now that $S$ is maximal. We prove that this implies that $G$ contains an $l$-separating edge. Since $G$ has only one labelled vertex, say $v$, and $\rk(G) = n\neq 0$, it is enough to show that $v$ has valence one (and hence the $l$-separating edge is the leaf containing $v$).
Indeed, if $v$ has valence at least two, then we can ``push away'' the labelling from $v$ by adding a new edge with one endpoint $v$ and the other endpoint containing the labelling that $v$ had in $G$ (and thus $v$ is now unlabelled).
See \cref{fig:expandingNonMaximal}.
This graph defines an element of $L$ that lies in $L_{>S}$ (all unlabelled vertices are at least trivalent), contradicting the maximality of $S$.
Therefore $v$ must be a vertex of valence one.
Thus $G$ contains an $l$-separating edge, and by 
\cref{lem_con_X}, $p_1^{-1}(L_{\leq S}) \simeq \Core(G,l) \simeq \NonTrees(G,l)$ is contractible. In particular this implies that $p_1^{-1}(L_{\leq S}) * L_{>S}$ is also $(n-2)$-connected.
This concludes the proof.
\end{proof}

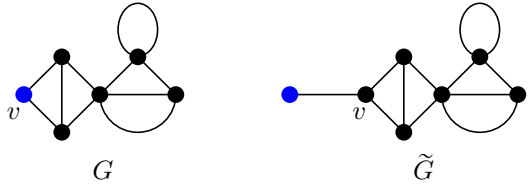
\begin{figure}[t]
\centering
\begin{tikzpicture}
\draw[draw=black, semithick, solid] (-3.00,0.00) -- (-2.50,0.50);
\draw[draw=black, semithick, solid] (-3.00,0.00) -- (-2.50,-0.50);
\draw[draw=black, semithick, solid] (-2.50,-0.50) -- (-2.50,0.50);
\draw[draw=black, semithick, solid] (-2.50,-0.50) -- (-2.00,0.00);
\draw[draw=black, semithick, solid] (-2.50,0.50) -- (-2.00,0.00);
\draw[draw=black, semithick, solid] (-2.00,0.00) -- (-1.00,0.00);
\draw[draw=black, semithick, solid] (-2.00,0.00) -- (-1.50,0.50);
\draw[draw=black, semithick, solid] (-1.50,0.50) -- (-1.00,0.00);
\draw[draw=black, semithick, solid] (-1.50,0.86) ellipse (0.26 and 0.35);
\draw[draw=black, semithick, solid] ([shift=(180:0.50 and -0.50)]-1.50,0.00) arc (180:0:0.50 and -0.50);
\draw[draw=blue, fill=blue, semithick, solid] (-3.00,0.00) circle (0.1);
\draw[draw=black, fill=black, semithick, solid] (-2.00,0.00) circle (0.1);
\draw[draw=black, fill=black, semithick, solid] (-1.50,0.50) circle (0.1);
\draw[draw=black, fill=black, semithick, solid] (-2.50,0.50) circle (0.1);
\draw[draw=black, fill=black, semithick, solid] (-2.50,-0.50) circle (0.1);
\draw[draw=black, fill=black, semithick, solid] (-1.00,0.00) circle (0.1);
\node[black, anchor=south west] at (-2.22,-1.25) {$G$};
\node[black, anchor=south west] at (-3.35,-0.45) {$v$};

\draw[draw=black, semithick, solid] (1.50,0.00) -- (2.00,0.50);
\draw[draw=black, semithick, solid] (1.50,0.00) -- (2.00,-0.50);
\draw[draw=black, semithick, solid] (2.00,-0.50) -- (2.00,0.50);
\draw[draw=black, semithick, solid] (2.00,-0.50) -- (2.50,0.00);
\draw[draw=black, semithick, solid] (2.50,0.00) -- (2.00,0.50);
\draw[draw=black, semithick, solid] (2.50,0.00) -- (3.50,0.00);
\draw[draw=black, semithick, solid] (2.50,0.00) -- (3.00,0.50);
\draw[draw=black, semithick, solid] (3.00,0.50) -- (3.50,0.00);
\draw[draw=black, semithick, solid] (1.50,0.00) -- (0.50,0.00);
\draw[draw=black, semithick, solid] (3.00,0.86) ellipse (0.26 and 0.35);
\draw[draw=black, semithick, solid] ([shift=(180:0.50 and -0.50)]3.00,0.00) arc (180:0:0.50 and -0.50);
\draw[draw=black, fill=black, semithick, solid] (2.00,-0.50) circle (0.1);
\draw[draw=black, fill=black, semithick, solid] (2.00,0.50) circle (0.1);
\draw[draw=black, fill=black, semithick, solid] (1.50,0.00) circle (0.1);
\draw[draw=black, fill=black, semithick, solid] (3.50,0.00) circle (0.1);
\draw[draw=blue, fill=blue, semithick, solid] (0.50,0.00) circle (0.1);
\draw[draw=black, fill=black, semithick, solid] (3.00,0.50) circle (0.1);
\draw[draw=black, fill=black, semithick, solid] (2.50,0.00) circle (0.1);
\node[black, anchor=south west] at (2,-1.25) {$\widetilde{G}$};
\node[black, anchor=south west] at (1.2,-0.45) {$v$};
\end{tikzpicture}

\caption{Left: A graph $G$ with a labelled vertex $v$ (in blue) of valence at least two. Right: The graph $\widetilde{G}$ obtained by pushing away the labelling from $v$ and creating a leaf. We have $G < \widetilde{G}$ and every non-labelled vertex of $\widetilde{G}$ has valence at least three.}
\label{fig:expandingNonMaximal}
\end{figure}

As a corollary, we get \cref{thm_connectivity_free_factor_systems}, which states that $\FFS$ is $(n+k-4)$-connected for $k\geq 2$ and $(n-2)$-connected for $k\in \ls 0,1\rs$:

\begin{proof}[Proof of \cref{thm_connectivity_free_factor_systems}]
As pointed out in the introduction, the case $k=0$, i.e.~$\mcA = \emptyset$, only occurs if $A$ is a free group. In this case, $n = \rk(A)$ and \cref{thm_connectivity_free_factor_systems} follows from \cite[Theorem B]{BG:Homotopytypecomplex}.
Hence, we can assume that $k \neq 0$; we can also assume that $n+k\geq 2$ as there is nothing to show otherwise. Under these assumptions, the poset $L$ is contractible by \cite[Section 4]{GL:outerspacefree}.
Thus by \cref{lem_bFreeS_FFS}, \cref{lem_p2_equivalence} and \cref{lem_p1_equivalence}, $\FFS$ is $(n + k-4)$-connected for $k\geq 2$, and $(n-2)$-connected if $k=1$.
In any case, $\FFS$ is always at least $(n-2)$-connected.
\end{proof}

The following is a corollary of \cite[Lemma 2.5]{HandelMosher}.

\begin{proposition}
\label{prop_lower_intervals}
Let $\mcA' \in \FFS = \FFS(A,\mcA)$. Then
\[  \FFS_{\sqsubseteq \mcA'}  \cong \left( \  \prod_{[B]\in \mcA'} \left( \FFS(B, \mcA_{[B]})\cup \{ \, \{[B]\},\emptyset \, \} \right) \ \right)  \sm \{ \, (\emptyset,\ldots,\emptyset) \, \} ,\]
where $\mcA_{[B]} = \{ \, [C]\in \mcA \tq \{[C]\}\sqsubseteq \{[B]\}\, \}$.
\end{proposition}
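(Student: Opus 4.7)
The plan is to construct an explicit order-preserving bijection by restricting a free factor system $\mcA'' \sqsubseteq \mcA'$ to each factor $B$ with $[B]\in \mcA'$, with Handel--Mosher's Lemma 2.5 supplying both well-definedness and the inverse. First I would fix once and for all a representative $B\leq A$ of each class $[B]\in \mcA'$. Given $\mcA''\in \FFS_{\sqsubseteq \mcA'}$, every $[D]\in \mcA''$ has a representative conjugate into a unique $B$ by the refinement relation, so I set $\mcA''_{[B]}$ to be the collection of the resulting $B$-conjugacy classes. Handel--Mosher's Lemma 2.5 guarantees that $\mcA''_{[B]}$ is a free factor system of $B$ and that the assignment $\mcA''\mapsto (\mcA''_{[B]})_{[B]\in\mcA'}$ is a bijection onto tuples of free factor systems of the factors $B$.

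The next step is to identify the image precisely. Restricting $\mcA\sqsubseteq \mcA''$ component-wise yields $\mcA_{[B]} \sqsubseteq \mcA''_{[B]}$, so every $\mcA''_{[B]}$ is either a proper free factor system of $B$ that strictly refines $\mcA_{[B]}$ (i.e.~belongs to $\FFS(B,\mcA_{[B]})$), the trivial system $\{[B]\}$ sitting at the top, or the empty system $\emptyset$ (which only arises when $B$ is free and $\mcA_{[B]}=\emptyset$). The tuple $(\emptyset,\ldots,\emptyset)$ is excluded because it would force $\mcA''=\emptyset$, and combined with $\mcA\sqsubseteq \mcA''$ this would give $\mcA=\emptyset=\mcA''$, contradicting the strict refinement $\mcA\sqsubsetneq \mcA''$ required for $\mcA''\in \FFS$.

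For the inverse direction, given a tuple $(\mcB_{[B]})_{[B]\in\mcA'}$ that is not identically empty, I would assemble $\mcA'' = \bigcup_{[B]\in\mcA'} \iota_B(\mcB_{[B]})$ by converting each component's $B$-conjugacy classes into $A$-conjugacy classes via the inclusion $B\hookrightarrow A$, with $\{[B]\}$ contributing $\{[B]\}$ and $\emptyset$ contributing nothing. Applying Handel--Mosher's Lemma 2.5 in reverse shows $\mcA''$ is a proper free factor system of $A$ refining $\mcA'$; the component-wise constraint $\mcA_{[B]}\sqsubseteq \mcB_{[B]}$ reassembles to $\mcA \sqsubseteq \mcA''$, and the exclusion of the empty tuple guarantees strictness. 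The forward and inverse maps are mutually inverse by the uniqueness in Handel--Mosher, and order-preservation is automatic since both the refinement order on $\FFS_{\sqsubseteq \mcA'}$ and the product order are defined component-wise.

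The main obstacle will be handling the three cases in the target set $\FFS(B,\mcA_{[B]})\cup\{\{[B]\},\emptyset\}$ uniformly and accounting for the implicit convention that $\emptyset$ is only a valid component value when $\mcA_{[B]}=\emptyset$ (and $B$ is free); once this trichotomy is matched against the output of Handel--Mosher's Lemma 2.5, the proof reduces to bookkeeping.
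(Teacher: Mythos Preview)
Your proposal is correct and follows exactly the approach the paper indicates: the paper gives no proof at all, simply stating that the proposition is a corollary of \cite[Lemma 2.5]{HandelMosher}, and your sketch unpacks that derivation. Your observation about the bookkeeping needed for the $\emptyset$ component (only meaningful when $\mcA_{[B]}=\emptyset$) is apt and is the only real subtlety.
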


If $\{[B]\} \in \FFS(A,\mcA)\cup \{[A]\}$, we write $\corank_B(\mcA)$ for the corank of $\mcA$ viewed as a free factor system of $B$.
For $\mcA'\in \FFS$, we have
\begin{equation}
\label{eq_rank_and_corank}
\corank_A(\mcA) = \corank_A(\mcA') + \sum_{[B]\in \mcA'} \corank_B(\mcA_{[B]}).
\end{equation}

We are now ready to prove \cref{thm_CM_frames_intro}, i.e.~to show that $\Frames := \Frames(A,\mcA)$ is Cohen--Macaulay of dimension $n-1$.

\begin{proof}[Proof of \cref{thm_CM_frames_intro}]
We argue by induction on $n+k$.
If $k=0$, i.e.~$\mcA = \emptyset$, the base case is $n = 1$, and here by definition $\Frames = \{ [A] \}$, which is contractible.
Hence we can assume that $n \geq 2$ if $\mcA = \emptyset$.
Furthermore, if $n = 0$ then $\Frames$ is empty, so we can also assume that $n \geq 1$ if $k\neq 0$.
In particular, we always have a poset inclusion $i:\Frames \hookrightarrow \FFS$.

In view of the previous paragraph, it is enough to show that the inclusion map $i$ is an $(n-1)$-equivalence, and that the codomain is $(n-2)$-connected.
To that end, we investigate the fibres and apply Quillen's fibre theorem.

Let $\mcA' \in \FFS\sm \Frames$.
By \cref{prop_lower_intervals}, we have
\[ i^{-1}\big( \ \FFS_{\sqsubseteq \mcA'} \ \big) \cong  \left( \ \prod_{[B]\in \mcA'} (\Frames(B,\mcA_{[B]})\cup \{ \emptyset\}) \ \right) \sm \{ (\emptyset,\ldots,\emptyset)\} . \]
This poset is homotopy equivalent to the join
\begin{equation}
\label{eq_join_frames}
\bigast_{[B]\in \mcA'} \Frames(B,\mcA_{[B]}).
\end{equation}
Since a class $[B]\in \mcA'$ gives a proper subgroup $B < A$, we have 
\begin{equation*}
	\corank_{B}(\mcA_{[B]})+|\mcA_{[B]}| < \corank_A(\mcA)+|\mcA| = n+k
\end{equation*}
by \cref{eq_rank_and_corank}.
Therefore, by induction, each $ \Frames(B,\mcA_{[B]}) $ is $(\corank_{B}(\mcA_{[B]}) - 2)$-connected, and so the join of \cref{eq_join_frames} is
\[ (2|\mcA'| - 2) + \sum_{[B]\in\mcA'} (\corank_{B}(\mcA_{[B]})-2) = n - \corank_A(\mcA') - 2\]
connected.

On the other hand, since $\mcA \sqsubset \mcA'$, by \cref{thm_connectivity_free_factor_systems} we have that
\[ \FFS_{\sqsupset \mcA'} = \FFS(A,\mcA') \]
is at least $(\corank(\mcA')-2)$-connected.
Hence the join 
\[ i^{-1}\big( \ \FFS_{\sqsubseteq \mcA'} \ \big) * \FFS_{\sqsupset \mcA'}\]
is at least $(n-2)$-connected.
By Quillen's fibre theorem, the map $i$ is an $(n-1)$-equivalence.

Finally, the codomain $\FFS$ is at least $(n-2)$-connected by \cref{thm_connectivity_free_factor_systems}.
Therefore $\Frames$ is $(n-2)$-connected.

To conclude with the proof, we show the Cohen--Macaulayness.
As noted in the paragraph before \cref{thm_CM_frames_intro}, we can regard $\Frames$ as a simplicial complex. The link of a simplex $\mcA'\in  \Frames \cup \{\emptyset\}$ can be identified with $\Frames(A,\mcA')$, which we saw is $(\corank(\mcA')-2)$-connected.
But this link has dimension $\corank(\mcA')-1$, so it is spherical of this dimension.
Thus $\Frames$ is Cohen--Macaulay of dimension $n-1$.
\end{proof}

Before we conclude with the proof of \cref{thm_connectivity_frames}, we recall the notion of inflation of a simplicial complex.
The following definition is a generalisation of the one given in \cite{BWW:PosetFiberTheorems}, in the context of inflation of finite simplicial complexes.
The adaptation presented here corresponds to Definition 4.3 in \cite{PW}.

\begin{definition}
\label{def:inflation}
Let $K$ be any simplicial complex (not necessarily finite) and $\mcP = (P_v)_{v\in V(K)}$  a family of non-empty sets indexed on the vertex set $V(K)$ of $K$. The \textit{inflation of $K$ by $\mcP$}, which we denote by $(K,\mcP)$, is the simplicial complex whose simplices are of the form $\{ (v_0,p_0),\ldots, (v_k,p_k)\}$ such that $\{v_0,\ldots,v_k\}$ is a $k$-simplex of $K$ and $p_i \in P_{v_i}$ for all $i$.
\end{definition}

There is a natural map from the inflation to the original complex $p:(K,\mcP)\to K$ that takes a simplex $\{ (v_0,p_0),\ldots, (v_k,p_k)\} \in (K,\mcP)$ to $\{v_0,\ldots,v_k\}\in K$.
Note that $(K, \mcP)$ and $K$ have the same dimension.
By studying the fibres of this map, one can conclude that, if $K$ is finite-dimensional, then $(K,\mcP)$ is Cohen--Macaulay if and only if $K$ is.
See Proposition 5.10 of \cite{PW} for more details.

\begin{proof}[Proof of \cref{thm_connectivity_frames}]
Let $A$ be a free group of finite rank $n$.
The complex $\PartialBases(A)$ is the inflation of $\Frames(A,\emptyset)$
in the sense of \cref{def:inflation}.
Precisely, for a vertex $\{[H]\}$ in $\Frames(A,\emptyset)$, we take $P_{[H]}$ as the set of $A$-conjugacy classes of generators of $H$.
Since $\Frames(A,\emptyset)$ is Cohen--Macaulay of dimension $n-1$ by \cref{thm_CM_frames_intro}, $\PartialBases(A)$ is also Cohen--Macaulay of dimension $n-1$ by \cite[Proposition 5.10]{PW}.
\end{proof}

\printbibliography

\end{document}